\numberwithin{equation}{section}
\numberwithin{figure}{section}
\theoremstyle{plain}
\newtheorem{thm}{\protect\theoremname}
  \theoremstyle{remark}
  \theoremstyle{definition}
  \theoremstyle{plain}
  \newtheorem{cor}[thm]{\protect\corollaryname}
  \newtheorem{lem}[thm]{\protect\lemmaname}
  \newtheorem{defn}[thm]{\protect\defname}
  \newtheorem{conj}[thm]{\protect\conjname}
  \providecommand{\corollaryname}{Corollary}
  \providecommand{\examplename}{Example}
  \providecommand{\remarkname}{Remark}
  \providecommand{\defname}{Definition}
  \providecommand{\conjname}{Conjecture}
  \providecommand{\lemmaname}{Lemma}
\providecommand{\theoremname}{Theorem}
\begin{document}

\title{Romik's Conjecture for the Jacobi Theta Function}

\author{Tanay Wakhare$^{\dag}$}
\address{$^{\dag}$~University of Maryland, College Park, MD 20742, USA}
\email{twakhare@gmail.com}

\maketitle


\begin{abstract}
Dan Romik recently considered the Taylor coefficients of the Jacobi theta function around the complex multiplication point $i$. He then conjectured that the Taylor coefficients $d(n)$ either vanish or are periodic modulo any prime ${p}$; this was proved by the combined efforts of Scherer and Guerzhoy-Mertens-Rolen, who considered arbitrary half integral weight modular forms. We refine previous work for $p \equiv 1 \pmod{4}$ by displaying a concise algebraic relation between $d\left( n+ \frac{p-1}{2} \right)$ and $d(n)$ related to the $p$-adic factorial, from which we can deduce periodicity with an effective period.
\end{abstract}


\section{Introduction}
Last year, Dan Romik published a fundamental paper \cite{Romik} considering the Taylor expansion of the classical \textit{Jacobi theta function} $\theta_3$, defined as 
$$\theta_3(x):= \sum_{n=-\infty}^\infty e^{-\pi n^2 x}.$$
It satisfies an important modular transformation given by
$$ \theta_3\left(\frac{1}{x}\right) = \sqrt{x} \theta_3(x).$$
The fixed point of this transformation, $x=1$, is the natural point to Taylor expand around. In fact, the natural function to study is $$\sigma_3(z):=\frac{1}{\sqrt{1+z}} \theta_3\left( \frac{1-z}{1+z} \right) = \theta_3(1) \sum_{n=0}^\infty \frac{d(n)}{(2n)!} \left( \frac{\Gamma^8\left(\frac14\right)}{2^7\pi^4} \right)^n z^{2n}.$$
The reason for considering such a rescaled function is that the M\"obius transformation $x \mapsto z= \frac{1-x}{1+x}$ conformally maps the right half-place to the unit disc and sends the inversion map $x \mapsto \frac{1}{x}$ to the reflection $z \mapsto -z$, so that the modular transformation satisfied by $\theta_3$ is then equivalent to the statement $\sigma_3(z) = \sigma_3(-z)$, and Taylor expanding $\theta_3(x)$ around $x=1$ is equivalent to Taylor expanding the simpler $\sigma_3(z)$ around $z=0$.

Although Romik found a recurrence for the Taylor coefficients $d(n)$ in Definition \ref{dndef}, there does not seem to exist a closed form expression for these coefficients. They depend on a sequence $s(n,k)$, which in turn depends on a sequence $u(n)$, which is given by a recurrence relation. This triply nested definition makes it rather unwieldy to work with the Taylor coefficients directly, though Romik conjectured several nice properties of the $d(n)$ coefficients modulo any prime. This paper is dedicated towards refining the second half of Romik's conjecture, which was proven by the combined efforts of Scherer \cite{Scherer} and Guerzhoy-Mertens-Rolen \cite{Rolen}. Guerzhoy, Mertens, and Rolen in fact prove a stronger statement in the context of an arbitrary half integer weight modular form.
\begin{thm}\cite{Scherer, Rolen}
Modulo any prime $p$, $d(n)$ exhibits well defined behavior.
\begin{itemize}
\item For any prime $p \equiv 3 \pmod{4}$, there exists an $n_0$ such that $d(n)\equiv 0 \pmod{p}$ for $n> n_0$.
\item For any prime $p \equiv 1 \pmod{4}$ or $p=2$, $d(n) \pmod{p}$ is periodic.
\end{itemize}

\end{thm}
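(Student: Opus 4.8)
The plan is to recognize $d(n)$ as a sequence of CM values of modular forms and then study those values $p$-adically, where the dichotomy $p\equiv 1$ versus $p\equiv 3\pmod 4$ will reflect whether the underlying CM elliptic curve has ordinary or supersingular reduction. First I would make the modular interpretation precise: after the change of variables implicit in the definition of $\sigma_3$, the theta series $\theta_3$ is a modular form of weight $\tfrac12$ on $\Gamma_0(4)$, and the fixed point $x=1$ corresponds to the CM point $\tau=i$ attached to $\mathbb{Z}[i]$. The classical theory of Taylor expansions of modular forms, via the Maass--Shimura raising operator $\partial_k=\frac{1}{2\pi i}\frac{d}{d\tau}-\frac{k}{4\pi y}$ (which sends weight $k$ to weight $k+2$), identifies the order-$2n$ Taylor coefficient with the value $\partial^{\,n}_{1/2}\theta_3(i)$, up to an explicit factor of factorials and powers of $\tau-\bar\tau=2i$. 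The normalizing constant $\Gamma^8(\tfrac14)/(2^7\pi^4)$ appearing in $\sigma_3$ is, up to elementary factors, a power of the Chowla--Selberg period for $\mathbb{Q}(i)$, and dividing by it is exactly what is needed to make these CM values algebraic. Thus I would show that $d(n)$ is, up to normalization, the algebraic part of $\partial^{\,n}_{1/2}\theta_3(i)$, landing in $\mathbb{Z}[i]$ localized away from small primes; since the $d(n)$ are rational integers, the whole problem reduces to understanding these values modulo $p$.

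Next I would exploit the reduction type. The curve with $j=1728$ and CM by $\mathbb{Z}[i]$ reduces to a supersingular curve precisely when $p\equiv 3\pmod 4$ and to an ordinary curve precisely when $p\equiv 1\pmod 4$, which is the source of the dichotomy. The bridge to the mod-$p$ world is Katz's comparison of the Maass--Shimura operator with the algebraic theta operator $\theta=q\frac{d}{dq}$, together with the Hasse invariant $E_{p-1}$. For $p\equiv 3\pmod 4$ the CM point lies on the supersingular locus, where the Hasse invariant vanishes; the theory of theta cycles then forces the iterated theta values, and hence $d(n)$, to vanish modulo $p$ once $n$ is large enough, giving the first bullet. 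For $p\equiv 1\pmod 4$ the Hasse invariant is a $p$-adic unit at $i$, and replacing $n$ by $n+\tfrac{p-1}{2}$ raises the analytic weight by $2\cdot\tfrac{p-1}{2}=p-1\equiv 0\pmod{p-1}$; since the weight of a mod-$p$ modular form is defined only modulo $p-1$, this returns the value to itself up to multiplication by a fixed unit built from $E_{p-1}(i)$ and a ratio of factorials. Identifying that unit with a $p$-adic factorial quantity and invoking a Wilson-type congruence to show it is a root of unity yields genuine periodicity with period $\tfrac{p-1}{2}$, which is the refinement this paper is after; the case $p=2$ is handled directly.

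The main obstacle I anticipate is the half-integral weight. The machinery of mod-$p$ modular forms — the theta operator, the filtration, and theta cycles — is cleanest in integral weight, so I would need either to pass to integral weight by multiplying $\theta_3$ by a suitable form (or squaring and invoking the Shimura correspondence) while tracking the raising operator and filtration bounds through this passage, or to develop the half-integral analog directly. Pinning down the exact unit multiplier in the ordinary case, and in particular proving it is a root of unity rather than merely a unit, is the delicate computational heart of the argument; this is precisely where the explicit $p$-adic factorial relation between $d\!\left(n+\tfrac{p-1}{2}\right)$ and $d(n)$ does the work and sharpens mere periodicity into an effective period.
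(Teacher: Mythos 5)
Your proposal is sound in outline, but it is not this paper's argument: it is essentially the strategy of Guerzhoy--Mertens--Rolen \cite{Rolen}, which is one of the two sources from which this theorem is quoted. The paper never reproves the theorem as stated; it cites \cite{Scherer,Rolen} and then gives an independent, entirely elementary proof of a \emph{sharper} statement for $p\equiv 1\pmod{4}$ only (Theorem \ref{mainthm}). That elementary route runs: Lucas's theorem plus a careful multistep induction to show $u(n)\equiv v(n)\equiv 0\pmod{p}$ for $n\geq\frac{p+1}{2}$; Legendre's formula and partition combinatorics to collapse $s(n,k)\bmod p$ to a finite sum over ``core configurations''; and Frobenius's theorem on solutions of $x^p=1$ in symmetric groups, together with Wilson's theorem, to force the resulting sums to vanish. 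The trade-off between the two routes is real: your CM/reduction-type argument explains the $1\bmod 4$ versus $3\bmod 4$ dichotomy conceptually, applies to arbitrary half-integral weight forms, and covers the $p\equiv 3\pmod 4$ vanishing and the $p=2$ case, which the paper's machinery does not touch; the elementary route, in exchange, produces the explicit multiplier $-2^{\frac{p-1}{2}}3^27^2\cdots(2p-3)^2$ relating $d\left(n+\frac{p-1}{2}\right)$ to $d(n)$, hence an effective period $\frac{(p-1)^2}{2}$, where the geometric argument yields only an unspecified unit. Two cautions on your sketch, neither fatal to the statement as given: the coefficient of $z^{2n}$ corresponds to the $2n$-fold iterate $\partial^{2n}_{1/2}\theta_3(i)$ rather than the $n$-fold one (harmless, since the weight still shifts by a multiple of $p-1$ under $n\mapsto n+\frac{p-1}{2}$); and knowing the multiplier is a root of unity does \emph{not} give period $\frac{p-1}{2}$, since every nonzero residue mod $p$ is a root of unity and the multiplier is genuinely nontrivial in general (for instance it is $\equiv 3 \pmod{13}$), so one only gets period $\frac{p-1}{2}$ times the multiplicative order of the multiplier, which Fermat's little theorem bounds by $\frac{(p-1)^2}{2}$ --- exactly the paper's corollary.
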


We can now prove a stronger result in the case $p \equiv 1 \pmod{4}$. 
\begin{thm}\label{mainthm}
Consider $ p \equiv 1 \pmod{4}$. Then for $n \geq \frac{p+1}{2}$, we have
$$d\left( n+ \frac{p-1}{2} \right) \equiv -2^{\frac{p-1}{2}}3^27^2\cdots (2p-3)^2  d(n)   \pmod{p} . $$
\end{thm}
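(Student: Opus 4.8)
The plan is to exploit the recursive structure that Romik established for $d(n)$, working modulo $p$ throughout. Since $d(n)$ is defined through a triply-nested recurrence (via $s(n,k)$ and $u(n)$), my first step would be to extract from Romik's recurrence a single closed recurrence relation for $d(n)$ modulo $p$, ideally of the form expressing $d(n+1)$ as a $\Z$-linear (or polynomial-in-$n$) combination of earlier values. The appearance of the factor $3^2 7^2 \cdots (2p-3)^2$ --- a product of squares of the odd residues $1,5,9,\dots$ shifted, i.e. terms $\equiv 3 \pmod 4$ up to $2p-3$ --- strongly suggests that the shift by $\frac{p-1}{2}$ in the index corresponds to multiplying through by $\frac{p-1}{2}$ consecutive factors coming from a factorial-type term $(2n)!$ or a Pochhammer symbol in the recurrence. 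The hint in the abstract about the $p$-adic factorial (Wilson's theorem and its generalizations) tells me these products should collapse via Wilson-type congruences.

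\medskip

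The key technical step would be to understand how the quadratic character enters. For $p \equiv 1 \pmod 4$ we have $-1$ a quadratic residue, so $\left(\frac{p-1}{2}\right)!^2 \equiv -1 \pmod p$ by Wilson's theorem, and more refined products of arithmetic progressions of residues reduce to powers of $2$ and signs via the Gauss/Jacobi-style evaluations of $\prod_{k} (ak+b)$. Concretely, I would aim to show that iterating the $d$-recurrence $\frac{p-1}{2}$ times transforms $d(n)$ into $d\!\left(n+\frac{p-1}{2}\right)$ multiplied by exactly the product $-2^{\frac{p-1}{2}} \prod_{j} (2j-1)^2$ over the relevant range, where the $2^{\frac{p-1}{2}}$ collects the factor $\frac{\Gamma^8(1/4)}{2^7\pi^4}$ appearing in each Taylor coefficient's normalization reduced $p$-adically. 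The constraint $n \geq \frac{p+1}{2}$ is presumably needed so that none of the intermediate factorial factors vanish modulo $p$ (i.e. we avoid the index where a factor $\equiv 0 \pmod p$ would appear), keeping the relation an honest multiplicative one rather than a degenerate $0 \equiv 0$.

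\medskip

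I expect the main obstacle to be controlling the middle-level sequence $s(n,k)$ and the bottom-level sequence $u(n)$ simultaneously modulo $p$. A direct induction on $n$ for $d(n)$ alone may not close, because $d(n)$'s recurrence references the full triangular array $s(n,k)$, so I anticipate needing to first establish an auxiliary shift-congruence for $s(n,k)$ (and perhaps $u(n)$) relating $s\!\left(n+\frac{p-1}{2},k\right)$ to $s(n,k)$ up to the same multiplicative constant. Proving such an array-level congruence --- identifying which terms survive and which drop out modulo $p$ --- is likely where the genuine work lies. Once the auxiliary relation is in hand, the passage to $d(n)$ should be a summation that factors the constant out cleanly, and the final collapse of the factorial product to $-2^{\frac{p-1}{2}}3^2 7^2 \cdots (2p-3)^2$ follows from Wilson's theorem together with the evaluation of the product of odd residues in the class $3 \pmod 4$.
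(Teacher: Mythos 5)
Your proposal correctly senses the shape of the problem---that a direct induction on $d(n)$ alone cannot close, that auxiliary mod-$p$ control of $u(n)$ and $s(n,k)$ is needed first, and that Wilson's theorem enters when factorial-type products are reduced---but the central mechanism you propose is not the one that works, and the pieces that would make any version of it work are absent. You anticipate a per-entry shift congruence $s\left(n+\tfrac{p-1}{2},k\right)\equiv C\, s(n,k)\pmod p$; the paper establishes nothing of this sort, and the relation that actually drives the induction is a \emph{row-sum} vanishing: for each residue class $l \bmod \tfrac{p-1}{2}$,
$$\sum_{\substack{k\equiv l \bmod \frac{p-1}{2}}} 2^{n-k}s(n,k)\left(-2^{\frac{p-1}{2}}\Pi(p)\right)^{\frac{2(k-l)}{p-1}} \equiv 0 \pmod p,$$
where $\Pi(p)=3^27^2\cdots(2p-3)^2$. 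Plugging the inductive hypothesis into $d(n)=v(n)-\sum_k 2^{n-k}s(n,k)d(k)$, grouping $k$ by residue class, and peeling off the $k=n$ term (where $s(n,n)=1$) then yields the theorem. Proving that row-sum vanishing is where essentially all of the work lies, and it requires three ingredients your plan does not contain: (a) the vanishing $u(n)\equiv v(n)\equiv 0 \pmod p$ for $n\geq\tfrac{p+1}{2}$, proved by Lucas's theorem and a delicate multi-stage induction (this is also what makes the hypothesis $n\geq\tfrac{p+1}{2}$ necessary---it kills the $v(n)$ term in the recurrence---not, as you suggest, the avoidance of vanishing factorial factors); (b) a partition-theoretic analysis via Legendre's formula showing that in Scherer's expansion $s(n,k)=\sum_{\lambda} N_\lambda \prod_i u\left(\tfrac{i-1}{2}\right)^{c_i}$ only a finite set of ``core configurations'' survives mod $p$, with $c_p$ forced to its maximal value; and (c) a group-theoretic input, Frobenius's theorem applied to elements of order $p$ in symmetric groups, which supplies the divisibility $\sum_k \frac{m!}{k!\,(m-pk)!\,p^k}\equiv 0 \pmod p$ that makes each inner sum collapse.

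One further concrete error: you attribute the factor $2^{\frac{p-1}{2}}$ to a $p$-adic reduction of the normalization $\frac{\Gamma^8(1/4)}{2^7\pi^4}$. That constant never enters the arithmetic at all---$d(n)$ is an integer sequence defined by the recurrence $d(n)=v(n)-\sum_{k}2^{n-k}s(n,k)d(k)$, and the $2^{\frac{p-1}{2}}$ arises simply because shifting $k$ by $\tfrac{p-1}{2}$ inside the weight $2^{n-k}$ changes it by exactly that factor; this is what allows the weighted sum over a residue class to become a constant multiple of an unweighted one. So while your instincts about Wilson's theorem and the need for an array-level auxiliary statement are sound, the proposal as written has a genuine gap at its core step and would not close without the partition analysis and the symmetric-group counting argument.
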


This encodes more arithmetic information, and on iterating this $p-1$ times we see that $d(n)$ is periodic with period $\frac{(p-1)^2}{2}$, which is not necessarily minimal.  The proof structure is tripartite:
\begin{enumerate}
\item show that $u(n), v(n) \equiv 0 \pmod{p}$ for $n\geq \frac{p+1}{2}$, a result conjectured in Scherer's paper \cite{Scherer};
\item use this to simplify the expression for $s(n,k) \pmod{p}$;
\item use the expression for $s(n,k)$ to show the desired periodicity of $d(n) \pmod{p}$.
\end{enumerate}
Our methods are elementary, and consist of a tour through classical number theory and group theory. We essentially used to method of \cite{Scherer}, who studied the $p=5$ case; however, the proof for arbitrary $p$ becomes \textit{significantly} more technically complex. The modular form approach of Guerzhoy-Mertens-Rolen \cite{Rolen} is extremely beautiful, since it proves eventual periodicity for any weight $1/2$ modular form. However, with our method we can show not just periodicity but also a finer algebraic relation between $d\left(n+ \frac{p-1}{2}\right)$ and $d(n)$.


The study of the Fourier coefficients of modular forms is a prominent thread of twentieth century mathematics; this gives us a first hint at a similarly deep theory, where Taylor coefficients of various modular forms expanded around complex multiplication points have $p$-adic properties analogous to the Fourier coefficients. A deep theorem of Ono and Skinner \cite{Ono} states that for a ``good" half integral weight modular form, for all but finitely many primes $\ell$, there are infinitely many Fourier coefficients at squarefree indices divisible by $\ell$. We expect similarly sweeping results about the Taylor coefficients of half integral weight modular forms to hold as well. It would also be nice to see the Taylor coefficients of other fundamental modular forms studied explicitly, such as the elliptic $j$ invariant or Dedekind $\eta$--function.


\section{Vanishing of $u(n)$}
We begin with the definition of the $u(n)$ and $v(n)$ coefficients, in terms of hypergeometric functions and a recurrence. We require the Gauss hypergeometric series
$${}_{2}F_{1}\left(\begin{array}{c}
a, b\\
c
\end{array};z\right).:= \sum_{n=0}^\infty \frac{(a)_n(b)_n}{n!(c)_n}z^n,$$
with $(a)_n:= \prod_{i=0}^{n-1}(a+i)$ denoting a Pochhammer symbol.
\begin{defn}\cite{Romik}
We define our coefficients in terms of the following generating functions:
$$\sum_{n=0}^\infty \frac{u(n)}{(2n+1)!}t^n  := \frac{  {}_{2}F_{1}\left(\begin{array}{c}
\frac34, \frac34\\
\frac32
\end{array};4t\right) }{  {}_{2}F_{1}\left(\begin{array}{c}
\frac14,\frac14\\
\frac12
\end{array};4t\right).}$$
and
$$\sum_{n=0}^\infty \frac{v(n)}{2^{n}(2n)!}t^n  := \sqrt{{}_{2}F_{1}\left(\begin{array}{c}
\frac14, \frac14\\
\frac12
\end{array};4t\right).}$$
Equivalently, we set $u(0)=v(0)=1$ and calculate them using the recurrences
\begin{equation}\label{urecur}
u(n) = 3^27^2\cdots (4n-1)^2 - \sum_{m=0}^{n-1}\binom{2n+1}{2m+1}1^25^2 \cdots (4(n-m)-3)^2 u(m)
\end{equation}
and
\begin{equation}\label{vrecur}
v(n) =2^{n-1} 1^25^2\cdots (4n-3)^2 - \frac12 \sum_{m=1}^{n-1}\binom{2n}{2m}v(m)v(n-m).
\end{equation}
\end{defn}

\begin{defn}\label{dndef}\cite{Romik}
We define an auxiliary matrix of coefficients by
$$ s(n,k) = \frac{(2n)!}{(2k)!} [z^{2n}]\left(  \sum_{j=0}^\infty \frac{u(j)}{(2j+1)!}z^{2j+1} \right)^{2k}.$$
Finally, we can define the sequence
$$ d(n) = v(n) - \sum_{k=1}^{n-1} 2^{n-k} s(n,k) d(k).$$
\end{defn} 
As a corollary of the integrality of $s(n,k)$, the $d(n)$ coefficients are also integers.

For the proof of the Lemma \ref{ulem}, we will heavily depend on a classical theorem of Lucas regarding the congruence properties of binomial coefficients:
\begin{lem}(Lucas's Theorem)\label{kummer}
For any prime $p$, write $n$ and $k$ in terms of their base $p$ expansions, denoted by $n = (n_ln_{l-1}\ldots n_0)_p$ and $k = (k_lk_{l-1}\ldots k_0)_p$. Then
$$\binom{n}{k} \equiv \prod_{i=0}^l \binom{n_i}{k_i} \pmod{p},$$
where we set $\binom{n}{k}=0$ if $k > n$.
\end{lem}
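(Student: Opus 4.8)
The plan is to prove the congruence through generating functions in the polynomial ring $\mathbb{F}_p[x]$, exploiting the Frobenius endomorphism rather than any direct manipulation of factorials. The starting observation is that $\binom{n}{k}$ is precisely the coefficient of $x^k$ in $(1+x)^n$, so the entire statement can be read off by comparing two polynomial expansions modulo $p$.

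First I would record the basic congruence $(1+x)^p \equiv 1 + x^p \pmod p$, which holds since every intermediate binomial coefficient $\binom{p}{j}$ with $0 < j < p$ is divisible by $p$. Iterating this (equivalently, applying the Frobenius map repeatedly) yields $(1+x)^{p^i} \equiv 1 + x^{p^i} \pmod p$ for every $i \geq 0$. Writing $n = \sum_{i=0}^l n_i p^i$ in base $p$ and splitting the exponent accordingly, I would then compute
$$(1+x)^n = \prod_{i=0}^l (1+x)^{n_i p^i} \equiv \prod_{i=0}^l \left(1 + x^{p^i}\right)^{n_i} = \prod_{i=0}^l \sum_{k_i=0}^{n_i} \binom{n_i}{k_i} x^{k_i p^i} \pmod p.$$

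The final step is to extract the coefficient of $x^k$ from both sides. On the left it is $\binom{n}{k}$; on the right, a monomial $x^{\sum_i k_i p^i}$ appears with coefficient $\prod_{i=0}^l \binom{n_i}{k_i}$, where each $k_i$ ranges over $\{0, \ldots, n_i\}$. The key point, and the only real subtlety, is the uniqueness of the base-$p$ representation: since each $k_i < p$, the exponent $\sum_i k_i p^i$ equals $k$ for exactly one choice of digits $(k_i)$, namely the base-$p$ digits of $k$. Hence the coefficient of $x^k$ on the right collapses to the single product $\prod_{i=0}^l \binom{n_i}{k_i}$, which gives the claimed congruence. The convention $\binom{n_i}{k_i}=0$ when $k_i > n_i$ is automatically consistent, since such a digit never arises in the expansion of the $i$-th factor, and it correctly forces $\binom{n}{k} \equiv 0 \pmod p$ whenever some base-$p$ digit of $k$ exceeds the corresponding digit of $n$.
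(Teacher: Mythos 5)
Your proof is correct, but there is nothing in the paper to compare it against: the paper states this lemma as a classical result of Lucas and uses it as a black box, giving no proof. Your argument is the standard generating-function proof, and it is complete: the Frobenius congruence $(1+x)^p \equiv 1+x^p \pmod{p}$ (valid because $p \mid \binom{p}{j}$ for $0<j<p$) iterates to $(1+x)^{p^i} \equiv 1+x^{p^i} \pmod{p}$, the base-$p$ decomposition of the exponent factors $(1+x)^n$ accordingly, and coefficient extraction finishes the job. You correctly isolate the one genuine subtlety, namely that each digit variable $k_i$ in the expansion satisfies $k_i \le n_i < p$, so by uniqueness of base-$p$ representations at most one digit tuple $(k_0,\ldots,k_l)$ can produce the exponent $k$; this also disposes of the edge cases, since when some digit of $k$ exceeds the corresponding digit of $n$ (which necessarily happens whenever $k>n$, as componentwise domination of digits would force $k \le n$), no tuple contributes and the coefficient of $x^k$ is $0$, in agreement with the convention $\binom{n_i}{k_i}=0$. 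This is exactly the proof one would want to splice in if the lemma were to be made self-contained.
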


We can then show the following fundamental result, which was conjectured by Scherer\cite{Scherer}. The only tools we need are Lucas's Theorem and a careful multistep induction.
\begin{lem}\label{ulem}
Consider $p \equiv 1 \pmod{4}$. Then $u(n) \equiv v(n) \equiv 0 \pmod{p}$ for $n \geq \frac{p+1}{2}$.
\end{lem}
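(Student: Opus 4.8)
The plan is to run a single strong induction on $n \geq \frac{p+1}{2}$ for each of $u$ and $v$, exploiting the fact that in each recurrence the ``leading'' product and the convolution sum can be made to vanish modulo $p$ separately, except on one short initial range for $u$ where a single surviving term must cancel the leading product exactly. Throughout I write $p = 4k+1$, so that $\frac{p-1}{2} = 2k$ and $\frac{p+1}{2} = 2k+1$, and abbreviate the two leading products as $A(n) = 3^2 7^2 \cdots (4n-1)^2$ and $B(n) = 1^2 5^2 \cdots (4n-3)^2$. The key elementary observations are that $4i-3 \equiv 0 \pmod p$ exactly when $i \equiv k+1$, so $B(n) \equiv 0$ for $n \geq k+1$; and $4i-1 \equiv 0 \pmod p$ exactly when $i \equiv 3k+1$, so $A(n) \equiv 0$ only for $n \geq 3k+1$. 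The second fact is the source of all the difficulty: for $\frac{p+1}{2} \le n \le 3k$ the leading term of $u(n)$ is a unit and cannot die on its own.

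For $v$ the argument is clean. Since $n \ge \frac{p+1}{2} \ge k+1$, the leading term $2^{n-1} B(n) \equiv 0$. In the convolution $\sum_m \binom{2n}{2m} v(m) v(n-m)$, any term with $m \ge \frac{p+1}{2}$ or $n-m \ge \frac{p+1}{2}$ dies by the inductive hypothesis, and when $n \ge p$ every term is of this type. The only remaining case is $\frac{p+1}{2} \le n \le p-1$ with $m, n-m \le \frac{p-1}{2}$, and there Lucas's Theorem applied to $2n = p + (2n-p)$ gives $\binom{2n}{2m} \equiv \binom{2n-p}{2m} \pmod p$, which vanishes because $2m > 2n-p$ is equivalent to $2(n-m) < p$, i.e.\ $n-m \le \frac{p-1}{2}$. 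Hence $v(n) \equiv 0$.

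For $u$ I would first record two reductions, valid for $1 \le j \le k$. Shifting the index, $4(2k+t)-1 \equiv 4t-3 \pmod p$, so the extra factors telescope into $B$ and $A(2k+j) \equiv A(2k) B(j) \pmod p$. And evaluating the recurrence at $n = 2k$, every binomial is $\binom{p}{2m+1}$ with $0 < 2m+1 < p$, hence divisible by $p$; the whole convolution drops and $u(2k) \equiv A(2k) \pmod p$. With these in hand the induction splits into two ranges. For $n \ge 3k+1$ a surviving convolution term would need both $m \le \frac{p-1}{2}$ (else $u(m) \equiv 0$) and $n-m \le k$ (else $B(n-m) \equiv 0$), forcing $m \ge n-k \ge 2k+1$, a contradiction; since also $A(n) \equiv 0$, we get $u(n) \equiv 0$. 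For $\frac{p+1}{2} \le n \le 3k$ the same two constraints leave exactly the single index $m = 2k$: for $m \le 2k-1$ in the window $n-m \le k$ Lucas's Theorem (via $2n+1 = p + 2j$) forces $\binom{2n+1}{2m+1} \equiv \binom{2j}{2m+1} = 0$, while at $m = 2k$ one gets $\binom{2n+1}{p} \equiv 1$. Thus $u(n) \equiv A(n) - B(j)\,u(2k) \equiv A(2k)B(j) - B(j)A(2k) = 0$ by the two reductions.

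The main obstacle is precisely this short range $\frac{p+1}{2} \le n \le 3k$: here nothing vanishes for free, and one must pin down exactly which convolution terms survive modulo $p$. Isolating the single term $m = 2k$ through a careful Lucas's Theorem analysis of $\binom{2n+1}{2m+1}$, and then recognizing the two congruences $A(2k+j) \equiv A(2k)B(j)$ and $u(2k) \equiv A(2k)$ that make it cancel the leading product, is the technical heart of the proof; the base-$p$ digit bookkeeping needed to justify the vanishing of $\binom{2n+1}{2m+1}$ for arbitrary $p$ is where the complexity beyond the $p=5$ case resides.
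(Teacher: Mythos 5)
Your proposal is correct and takes essentially the same approach as the paper: the heart of both arguments is Lucas's theorem giving $u\left(\frac{p-1}{2}\right) \equiv 3^2 7^2 \cdots (2p-3)^2$, killing the binomial coefficients for the intermediate indices $m$, and leaving only the $m = \frac{p-1}{2}$ term, which cancels the leading product via the same telescoping congruence (your $A(2k+j) \equiv A(2k)B(j)$ is exactly the paper's ``most delicate'' cancellation), with $p^2$-divisibility of the products $1^2 5^2 \cdots (4(n-m)-3)^2$ and the inductive hypothesis disposing of everything else. Your bookkeeping is marginally cleaner --- splitting the $u$-induction at $n = \frac{3(p-1)}{4}$ so that the upper range needs no Lucas at all, and running the $v$-induction in a single unified pass rather than the paper's separate regimes --- but the tools and the key cancellation are identical.
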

\begin{proof}
Consider $n= \frac{p-1}{2}$, so that $$\binom{2n+1}{2m+1} = \binom{p}{2m+1} \equiv \binom{1}{(2m+1)_1}\binom{0}{(2m+1)_0} \pmod{p}$$ by Lucas's Theorem. However, for $0\leq m \leq n-1$, $2m+1$ always has a nonzero last digit, so that this binomial coefficient is always $\equiv 0$. Therefore, by recurrence \eqref{urecur} we have 
\begin{equation}\label{uendpoint}
u\left(\frac{p-1}{2}\right) \equiv   3^27^2\cdots (2p-3)^2 \pmod{p}.
\end{equation}
Now take $n = \frac{p+1}{2}$, so that $2n+1 = p+2= (12)_p$. The only values of $2m+1$ for $0\leq m \leq n-1$ with last digit $<2$ correspond to $m=0$ and $m = \frac{p-1}{2}$. Thus, by Lucas's Theorem, we have
\begin{align*}
u\left(\frac{p+1}{2}\right) \equiv   3^27^2\cdots (2p+1)^2  - u(0) \binom{p+2}{1} 1^25^2\cdots (2p-1)^2 -u\left(\frac{p-1}{2}\right)  \binom{p+2}{p} \pmod{p}.
\end{align*}
Now we use Lucas's Theorem to write $\binom{p+2}{p} \equiv \binom{1}{1}\binom{2}{0} \equiv 1 \pmod{p}$ and note that since $p \equiv 1 \pmod{4}$, $p$ divides one of the terms in the product corresponding to the $u(0)$ term. Therefore, using our previous calculation for $u\left(\frac{p-1}{2}\right) $,
\begin{align*}
u\left(\frac{p+1}{2}\right)  \equiv   3^27^2\cdots (2p-3)^2(2p+1)^2  - 3^27^2\cdots (2p-3)^2 \equiv 0 \pmod{p},
\end{align*}
since $(2p+1)^2 \equiv 1$. This provides the base case we now require for an inductive proof. Assume $u\left({\frac{p+1}{2} + l}\right) \equiv 0$ for $0 \leq l < N$, for some $N \leq \frac{p-3}{2}$. Now consider $u\left({\frac{p+1}{2}+N}\right)$, with the bound on $N$ ensuring that $2\left(\frac{p+1}{2}+N\right)+1 \leq 2p-1$. We now decompose recurrence \eqref{urecur} as follows, and show that the contribution from each line separately totals $0$:
\begin{align}
u\left({\frac{p+1}{2}+N}\right)= 3^27^2\cdots &(4N+2p+1)^2 - \binom{p+2+2N}{p} 1^25^2\cdots (4N+1)^2 u\left({\frac{p-1}{2}}\right) \label{line1} \\
&- \sum_{m=\frac{p+1}{2}}^{\frac{p+1}{2}+N-1}\binom{p+2+2N}{2m+1}1^25^2 \cdots (2p-1+4N-4m)^2 u(m) \label{line2}\\
&- \sum_{m=N+1}^{\frac{p-3}{2}}\binom{p+2+2N}{2m+1} 1^25^2 \cdots (2p-1+4N-4m)^2 u(m) \label{line3}\\
&- \sum_{m=0}^{N}\binom{p+2+2N}{2m+1} 1^25^2 \cdots (2p-1+4N-4m)^2 u(m) \label{line4}.
\end{align}
First, consider the contribution from line \eqref{line1}, which isolates the $m= \frac{p-1}{2}$ term and the constant term. By the bound on $N$, the first digit in the base $p$ expansion of $2\left(\frac{p+1}{2}+N\right)+1$ will be a $1$, and the second digit will be $2(N+1)$, which is less than $p$.  By Lucas's theorem we have
$$\binom{p+2+2N}{p} \equiv \binom{1}{1} \binom{2+2N}{0} \equiv 1 \pmod{p}.$$
Also note that we previously calculated $u\left({\frac{p-1}{2}}\right) \equiv   3^27^2\cdots (2p-3)^2$, so that
\begin{align*}
1^25^2\cdots (4N+1)^2 u\left({\frac{p-1}{2}}\right)& \equiv   3^27^2\cdots (2p-3)^2 \times 1^25^2\cdots (4N+1)^2  \\
&\equiv  3^27^2\cdots (2p-3)^2 \times (2p+1)^2(2p+5)^2\cdots (4N+2p+1)^2 \pmod{p},
\end{align*}
which exactly cancels with the constant term in the $u(n)$ recurrence! Therefore, line \eqref{line1} contributes nothing -- the most delicate part of the proof. Line \eqref{line2} vanishes modulo $p$ by the inductive hypothesis. Line \eqref{line3} vanishes by Lucas's Theorem, since 
$$ \binom{p+2+2N}{2m+1} \equiv \binom{1}{0}\binom{2+2N}{2m+1} \pmod{p},$$
but $2+2N < 2m+1 <p$ in this range of summation, hence the binomial is congruent to $0$. Line \eqref{line4} vanishes since  $2p-1+4N-4m \geq 2p+1$ in this region of summation, so that since $p\equiv 1 \pmod{4}$, the product actually contains $p^2$ and vanishes modulo $p$.

Therefore, we've shown $u(n) \equiv 0 \pmod{p}$ for $\frac{p+1}{2}\leq n \leq p-1.$ We could not prove the general case all at once since the previous proof depended on Lucas's Theorem, which is sensitive to the last digit of our index $n$. The rest of the induction however follows easily, since we rewrite recurrence \eqref{urecur} as 
\begin{align}
u(N) = 3^27^2\cdots (4N-1)^2 -&\sum_{m=0}^{\frac{p-1}{2}}\binom{2N+1}{2m+1}1^25^2 \cdots (4(N-m)-3)^2 u(m) \label{line1a}\\
-&\sum_{m=\frac{p+1}{2}}^{N-1}\binom{2N+1}{2m+1}1^25^2 \cdots (4(N-m)-3)^2 u(m) \label{line1b}.
\end{align}
where $N\geq p$. We inductively assume that $u(n)=0$ for $\frac{p+1}{2}\leq n < N$. Since $N \geq p$ and $p \equiv 1 \pmod{4}$, we have $3p<4N-1$ and $3p \equiv 3 \pmod{4}$, so that $(3p)^2$ divides the constant term in line \eqref{line1a}, which must vanish mod $p$. Consider the sum in line \eqref{line1a}; since $N \geq p$ and $m \leq \frac{p+1}{2}$, we have $4(N-m)-3 \geq 4 (\frac{p+1}{2})-3 = 2p+1 $. Since $p \equiv 1 \pmod{4}$, $p^2$ must always divide the product in the sum and line \eqref{line1a} completely vanishes. Line \eqref{line1b} vanishes by the inductive hypothesis, completing the proof.

To show $v(n)\equiv 0$ for $n\geq \frac{p+1}{2}$, we adopt a similar proof structure; we will prove it for $\frac{p+1}{2}\leq n<p$ using Lucas's Theorem, and then show it for $n \geq p$ by a separate induction. First, consider $n = \frac{p+1}{2}$, so that $2n = (11)_p$. Then for $1\leq m \leq n-1$, $2m$ always has last digit $\geq 2$. Hence by Lucas's Theorem, the binomial coefficient always vanishes mod $p$. We then note that $p < 4n-3$ so that since $p\equiv 1 \pmod{4}$, $p^2$ divides the constant term. Hence $v\left({\frac{p+1}{2}}\right) \equiv 0$. 

We now proceed by induction. Assume $v\left({\frac{p+1}{2} + l}\right) \equiv 0$ for $0 \leq l < N$, for some $N \leq \frac{p-3}{2}$. Now consider $v\left({\frac{p+1}{2}+N}\right)$, with the bound on $N$ ensuring that $2\left(\frac{p+1}{2}+N\right)\leq 2p-2$. We now decompose the recurrence \eqref{vrecur} as follows:
\begin{align}
v\left(\frac{p+1}{2}+N\right)  &=2^{n-1} 1^25^2\cdots (2p-1+4N)^2  \label{line3a}\\
&- \frac12 \sum_{m=1}^{N}\binom{p+1+2N}{2m}v(m)v\left(\frac{p+1}{2}+N-m\right) \label{line3b}\\
&- \frac12 \sum_{m=\frac{p+1}{2}}^{\frac{p+1}{2}+N}\binom{p+1+2N}{2m}v(m)v\left(\frac{p+1}{2}+N-m\right)  \label{line3c} \\
&- \frac12 \sum_{m=N+1}^{\frac{p-1}{2}}\binom{p+1+2N}{2m}v(m)v\left(\frac{p+1}{2}+N-m\right)  \label{line3d}.
\end{align}
We must have $p^2$ divide the constant term, so line \eqref{line3a} vanishes. By the induction hypothesis, the $v\left(\frac{p+1}{2}+N-m\right) $ term in line \eqref{line3b} vanishes, as does the $v(m)$ term in line \eqref{line3c} vanishes. Since $2\left(\frac{p+1}{2}+N\right) = p +1 + 2N$ has first digit $1$ and last digit $1+2N$, the binomial coefficient in line \eqref{line3d} evaluates to (noting $2N-2 \leq p-1 $ has one digit when expanded in base $p$)
$$ \binom{p+1+2N}{2m} \equiv \binom{1}{0}\binom{1+2N}{2m} \equiv 1 \cdot 0 \pmod{p}. $$
Hence $v\left({\frac{p+1}{2}+l}\right) \equiv 0$ for $ 0\leq l \leq \frac{p-3}{2}$ and thus $v(n) \equiv 0$ for $\frac{p+1}{2} \leq n \leq p-1$, completing the induction in this regime.

We now complete the induction for all $n$. Assume $N \geq p$ and $v(n)\equiv 0$ for all $\frac{p+1}{2} \leq n \leq N-1$. 
Then we write \eqref{vrecur} as 
\begin{align}
v\left(N\right)  &=2^{n-1} 1^25^2\cdots (4N-3)^2  \label{line4a}\\
&- \frac12 \sum_{m=\frac{p+1}{2}}^{N-1}\binom{2N}{2m}v(m)v\left( N-m\right) \label{line4b}\\
&- \frac12\sum_{m=1}^{\frac{p-1}{2}}\binom{2N}{2m}v(m)v\left( N-m\right) \label{line4c}.
\end{align}
Note $p \equiv 1 \pmod{4}$ so $p^2$ divides the term in line \eqref{line4a}, the $v(m)$ term in line \eqref{line4b} vanishes by the induction hypothesis, and the $v\left( N-m\right)$ term in line \eqref{line4c} vanishes by the induction hypothesis. Therefore $v(N) \equiv 0 \pmod{p}$ and we're done.
\end{proof}

\section{Reduction of $s(n,k)$}
Now that we have good control over the behavior of the $u$ and $v$ coefficients, we can reduce the $s(n,k)$ coefficients. Recall the definition
$$ s(n,k) = \frac{(2n)!}{(2k)!} [z^{2n}]\left(  \sum_{j=0}^\infty \frac{u(j)}{(2j+1)!}z^{2j+1} \right)^{2k}, $$
where $ [z^{2n}]$ denotes the coefficient of $z^{2n}$. We can then do the obvious thing and expand the product, collecting coefficients of each unique multinomial $u(j_1)^{c_1}u(j_2)^{c_2}\cdots$. This was done by Scherer, but before presenting his result we need to introduce some notation.

Given an integer $n$, a \textit{partition} of $n$ is a tuple $\lambda = (\lambda_1, \ldots, \lambda_k)$, arranged in weakly decreasing order (so that $\lambda_i \geq \lambda_{i+1}$), such that $\sum_{i=1}^k \lambda_k = n$. Every partition can equivalently be described by a tuple $(c_1,\ldots, c_n)$, where $c_i$ denotes the number of times the part $i$ appears in the partition $\lambda$. For instance, consider the partition $(3,1,1,1)$ of $6$. This can be described by the tuple $(c_1=3, c_2=0, c_3=1, c_4=0, c_5=0, c_6=0)$. Let $\mathfrak{P}_{n,k}$ denote the set of partitions of $n$ into $k$ \textit{odd} parts. Given $\lambda \in \mathfrak{P}_{n,k}$, consider the associated tuple $(c_1,\ldots,c_n)$. Then 
$$ N_\lambda := \frac{n!}{\prod_{i=1}^n i!^{c_i} c_i!} $$
is an integer \cite{Andrews} counting the number of set partitions of $N$ elements into $k$ blocks $\{B_i\}_{i=1}^k$ with $|B_i| = \lambda_i$. 
\begin{lem}\label{slem} \cite[Thm 8]{Scherer}
We have the alternate expansion
$$s(n,k) = \sum_{\lambda \in \mathfrak{P}_{2n,2k}} N_\lambda \prod_{i=1}^{2n} u\left( \frac{i-1}{2} \right)^{c_i}.$$
\end{lem}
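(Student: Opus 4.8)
The statement is purely combinatorial: it re-expresses the coefficient extraction defining $s(n,k)$ as a sum over partitions. The plan is to start from the definition
$$ s(n,k) = \frac{(2n)!}{(2k)!} [z^{2n}]\left(  \sum_{j=0}^\infty \frac{u(j)}{(2j+1)!}z^{2j+1} \right)^{2k}, $$
and apply the multinomial theorem to the $2k$-th power of the power series. Writing the inner series as $\sum_{j\geq 0} a_j z^{2j+1}$ with $a_j = u(j)/(2j+1)!$, raising to the power $2k$ produces a sum over all ways of choosing, with multiplicity, $2k$ terms whose exponents sum to $2n$. Grouping these choices by which distinct powers $z^{i}$ (necessarily with $i$ odd, since each factor contributes an odd exponent) are selected and how many times each appears, the selection data is exactly a partition of $2n$ into $2k$ odd parts, i.e.\ an element $\lambda \in \mathfrak{P}_{2n,2k}$.

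Concretely, I would set up the bijection between monomial selections and partitions as follows. A term in the expansion of the $2k$-th power corresponds to choosing an ordered $2k$-tuple $(j_1,\dots,j_{2k})$ with $\sum_\ell (2j_\ell+1) = 2n$; its contribution is $\prod_\ell a_{j_\ell} = \prod_\ell u(j_\ell)/(2j_\ell+1)!$. Collecting ordered tuples that induce the same partition $\lambda$ (equivalently the same multiplicity vector $(c_1,\dots,c_{2n})$, where $c_i$ counts how many factors contribute the exponent $z^i$), the number of such ordered tuples is the multinomial coefficient $\binom{2k}{c_1,c_2,\dots} = (2k)!/\prod_i c_i!$. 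Each factor contributing exponent $z^i = z^{2j+1}$ carries a denominator $(2j+1)! = i!$ and a numerator $u(j) = u\!\left(\frac{i-1}{2}\right)$, so the pooled contribution of one partition class is
$$ \frac{(2k)!}{\prod_i c_i!}\,\prod_i \frac{u\!\left(\frac{i-1}{2}\right)^{c_i}}{i!^{\,c_i}}. $$
Multiplying by the prefactor $(2n)!/(2k)!$ cancels the $(2k)!$ and leaves $\frac{(2n)!}{\prod_i i!^{c_i} c_i!}\prod_i u\!\left(\frac{i-1}{2}\right)^{c_i}$, in which the combinatorial factor is precisely $N_\lambda$ with $n$ replaced by $2n$. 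Summing over all $\lambda \in \mathfrak{P}_{2n,2k}$ yields the claimed identity.

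The only genuine subtlety — and the step I would write most carefully — is bookkeeping the index conventions so that the parity constraints line up. Two points need attention: first, that every occupied exponent $i$ is odd (forced because each of the $2k$ factors contributes an odd exponent $2j_\ell+1$), so that $c_i=0$ whenever $i$ is even and the product may honestly be taken over all $i$ from $1$ to $2n$; and second, that the substitution $i = 2j+1$, giving $j = \frac{i-1}{2}$ and $i! = (2j+1)!$, matches the numerator $u(j)$ and denominator $(2j+1)!$ appearing in the series. Once these identifications are fixed, the argument is a direct application of the multinomial theorem together with the definition of $N_\lambda$, so no further inductive or analytic machinery is required; the result is an algebraic identity in the formal variable $z$.
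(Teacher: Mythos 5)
Your proof is correct and is exactly the argument the paper has in mind: the paper cites this result from Scherer and describes it as expanding the $2k$-th power and ``collecting coefficients of each unique multinomial,'' which is precisely your multinomial-theorem bookkeeping, with the factor $(2n)!/(2k)!$ cancelling the multinomial's $(2k)!$ to produce $N_\lambda$. The parity check (all occupied exponents odd, so the partition lies in $\mathfrak{P}_{2n,2k}$) and the substitution $i=2j+1$ are handled correctly, so nothing is missing.
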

Note that since $u(n), N_\lambda$ are always integers, we also have that $s(n,k)$ is an integer. The purpose of Lemma \ref{ulem} was to eliminate many of the partitions in $\mathfrak{P}_{2n,2k}$ from this sum, since if $\lambda$ contains a part $i > p$, then $u\left( \frac{i-1}{2} \right) \equiv 0$ and the corresponding term vanishes. Therefore, we only need to consider $\lambda$ with largest part $p$ in Lemma \ref{mainlem}. In fact, when we reduce mod $p$ we can show that $N_\lambda \equiv 0$ \textit{almost always}, except for a small set of partitions which we can explicitly characterize. In the case $p=5$, the (rather technical) Lemma \ref{mainlem} reduces to \cite[Thm 11]{Scherer}.

Our restrictions on $\lambda \in \mathfrak{P}_{2n,2k}$ translate into the following set of equations in $\frac{p+1}{2}$ unknowns $\{c_1,c_3,c_5,\ldots, c_p\}$:
\begin{align*}
c_1+c_3+\cdots + c_p &= 2k, \\
c_1+3c_3+5c_5+\cdots + pc_p &= 2n,
\end{align*}
where $c_i$ denotes the number of occurrences of the part $i$. Throughout this section, we will use the shorthand
$$\alpha:= c_3 + c_5 + \cdots + c_{p-2}, \medspace\medspace \beta:=  3c_3 +5c_5+\cdots +(p-2)c_{p-2}, $$
so that our defining equations reduce to
\begin{align}
c_1+ \alpha + c_p &= 2k, \label{ceq1}\\
c_1+\beta+ pc_p &= 2n, \label{ceq2}
\end{align}

We also require a classical theorem of Legendre:
\begin{lem}(Legendre's Formula)
Let $\nu_p(n)$ denote the largest power of $p$ dividing $n$. Then 
$$\nu_p(n) = \frac{n-s_p(n)}{p-1} = \sum_{i=1}^\infty \left \lfloor{ \frac{n}{p^i}}\right \rfloor, $$
where $s_p(n)$ is the sum of the digits of $n$ when expressed in base $p$.
\end{lem}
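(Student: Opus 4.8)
The identity to be proven is the classical formula for $\nu_p(n!)$, the exponent of the prime $p$ in the factorization of $n!$ (this is the quantity for which $\frac{n-s_p(n)}{p-1}$ is correct). I would establish its two halves in turn, each by elementary counting, and I would not expect either to present real difficulty.

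First I would prove $\nu_p(n!) = \sum_{i=1}^\infty \lfloor n/p^i \rfloor$. Since the $p$-adic valuation is completely additive over products, $\nu_p(n!) = \sum_{m=1}^n \nu_p(m)$. The idea is then to write each valuation as a count, $\nu_p(m) = \#\{i \geq 1 : p^i \mid m\}$, so that
$$\nu_p(n!) = \sum_{m=1}^n \#\{i \geq 1 : p^i \mid m\} = \sum_{i=1}^\infty \#\{1 \le m \le n : p^i \mid m\} = \sum_{i=1}^\infty \left\lfloor \frac{n}{p^i}\right\rfloor,$$
where the interchange is legitimate because every term is nonnegative and only finitely many are nonzero (the sum over $i$ terminates once $p^i > n$), and the number of multiples of $p^i$ in $\{1,\ldots,n\}$ is exactly $\lfloor n/p^i \rfloor$.

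For the second equality I would pass to the base-$p$ expansion $n = \sum_{j=0}^\ell a_j p^j$ with digits $0 \le a_j \le p-1$, so that $s_p(n) = \sum_{j=0}^\ell a_j$. The key observation is that $\lfloor n/p^i \rfloor = \sum_{j \geq i} a_j p^{j-i}$, since dividing by $p^i$ and discarding the fractional part merely deletes the lowest $i$ digits. Substituting this and exchanging the order of summation gives
$$\sum_{i=1}^\infty \left\lfloor \frac{n}{p^i} \right\rfloor = \sum_{i=1}^\ell \sum_{j=i}^\ell a_j p^{j-i} = \sum_{j=1}^\ell a_j \sum_{i=1}^j p^{j-i} = \sum_{j=0}^\ell a_j \, \frac{p^j - 1}{p-1} = \frac{n - s_p(n)}{p-1},$$
where I have used the finite geometric sum $\sum_{i=1}^j p^{j-i} = (p^j-1)/(p-1)$ and then collected $\sum_j a_j p^j = n$ and $\sum_j a_j = s_p(n)$ (the $j=0$ term contributes nothing).

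As this is a classical lemma included only as a tool for the later analysis of $N_\lambda$ via Legendre's formula, there is no substantive obstacle; the only points demanding any care are the justification of the two summation interchanges (both handled by nonnegativity and local finiteness) and the digit-truncation identity for the floor. One could equally well simply cite the result.
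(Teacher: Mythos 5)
Your proof is correct, and it is the standard argument: the paper itself offers no proof of this lemma, citing it as a classical result of Legendre, so there is nothing to compare against beyond noting that your double-counting of multiples of $p^i$ and the digit-truncation/geometric-series computation are exactly the textbook route. You also rightly caught that the statement as printed is off: as written it asserts $\nu_p(n) = \frac{n-s_p(n)}{p-1}$, but the formula is for $\nu_p(n!)$, which is how the paper actually uses it (e.g.\ in the analysis of $N_\lambda$, where valuations of factorials such as $(2n)!$ and $c_i!$ are computed); your reformulation fixes this typo correctly.
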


\begin{lem}\label{mainlem}
Let $p\equiv 1 \pmod{4}$. For $\lambda \in \mathfrak{P}_{2n,2k}$ with largest part $p$, $N_\lambda \equiv 0 \pmod{p}$ unless 
\begin{itemize}
\item we have $\alpha\leq \beta <p$;
\item we have $c_p =\left \lfloor{ \frac{2n-2k}{p-1}}\right \rfloor $, which is the largest possible value of $c_p$.
\end{itemize}

\end{lem}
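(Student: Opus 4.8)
The plan is to compute $\nu_p(N_\lambda)$ exactly by Legendre's formula and then read off the two necessary conditions. Writing $N_\lambda=\frac{(2n)!}{\prod_i (i!)^{c_i}c_i!}$ (the product over odd parts $i=1,3,\dots,p$) and applying $\nu_p(m!)=\frac{m-s_p(m)}{p-1}$ to every factorial, I note that the only part $i\le p$ with $\nu_p(i!)\neq0$ is $i=p$, contributing $c_p$. Feeding in the defining relations \eqref{ceq1}--\eqref{ceq2}, equivalently $2n-2k=(\beta-\alpha)+(p-1)c_p$, all the bulk terms cancel and one is left with the clean identity
$$\nu_p(N_\lambda)=\frac{(\beta-\alpha)+\sum_i s_p(c_i)-s_p(2n)}{p-1}.$$
This is the computational heart of the argument; everything after it is a matter of bounding the numerator from below.

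Next I would reduce the whole problem to the middle parts alone. Combinatorially, $N_\lambda$ factors as the product of the multinomial $\binom{2n}{\beta,\,c_1,\,pc_p}$, the integer $\frac{(pc_p)!}{(p!)^{c_p}c_p!}$ counting partitions of $pc_p$ points into $c_p$ blocks of size $p$, and the integer $N':=\frac{\beta!}{\prod_{\text{mid}}(i!)^{c_i}c_i!}$ counting set partitions of a $\beta$-element set into the middle blocks (sizes $3,5,\dots,p-2$). Every factor is a positive integer, so $N'\mid N_\lambda$ and hence $\nu_p(N_\lambda)\ge\nu_p(N')=\nu_p(\beta!)-\sum_{\text{mid}}\nu_p(c_i!)$, the last equality again by Legendre since each middle $i<p$ has $\nu_p(i!)=0$. (One reaches the identical bound directly from the displayed formula by applying subadditivity $s_p(a+b)\le s_p(a)+s_p(b)$ twice to peel off the $c_1$ and $c_p$ contributions.) Thus the lemma reduces to the single clean claim that $p\mid N'$ whenever $\beta\ge p$.

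The crux, and the step I expect to be most delicate, is exactly this divisibility $p\mid N'$ for $\beta\ge p$. I would prove it group-theoretically. Since $\beta\ge p$, let a generator $g$ of $\Z/p\Z$ act on the $\beta$ points by cyclically permuting a fixed set of $p$ of them and fixing the rest; this permutes the set of middle-type partitions. Suppose a partition were fixed by the $p$-cycle $g$, and let $B$ be the block containing a cycled point. If $B$ met any fixed point, then $g(B)=B$ (two blocks sharing a point coincide), forcing $B$ to contain the whole $p$-orbit and so $|B|\ge p$; if instead $B$ lay inside the $p$-orbit, its orbit $B,g(B),\dots,g^{p-1}(B)$ would consist of $p$ disjoint equal-size blocks inside a set of size $p$, forcing $|B|=1$. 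Both are impossible because every middle block has size strictly between $1$ and $p$. Hence $g$ acts without fixed points, all orbits have size $p$, and $p\mid N'$. The same conclusion also follows purely from Legendre by splitting into the case where some middle $ic_i\ge p$ (where monotonicity already gives $\nu_p((ic_i)!)-\nu_p(c_i!)\ge1$) and the case where all $ic_i<p$ (where $\beta\ge p$ forces at least one carry, so $\nu_p(\beta!)\ge1$); the group action is cleaner and sidesteps this case analysis.

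Finally I would read off the lemma. The contrapositive of the divisibility gives the first bullet: $N_\lambda\not\equiv0$ forces $\beta<p$, while $\alpha\le\beta$ is automatic since every middle part exceeds $1$. The second bullet is then immediate: from $2n-2k=(\beta-\alpha)+(p-1)c_p$ with $0\le\beta-\alpha\le\beta<p$ we obtain $\beta-\alpha\le p-2<p-1$, whence $c_p=\lfloor\frac{2n-2k}{p-1}\rfloor$, which is visibly the largest value of $c_p$ permitted by \eqref{ceq1}--\eqref{ceq2}. So the only genuine obstacle is the divisibility $p\mid N'$; once the valuation formula is in hand, the rest is bookkeeping.
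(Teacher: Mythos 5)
Your proposal is correct, and its decisive step runs on a genuinely different engine than the paper's. The opening is the same: your exact identity $(p-1)\nu_p(N_\lambda)=(\beta-\alpha)+\sum_i s_p(c_i)-s_p(2n)$ is precisely the intermediate formula in the paper's proof, coming from Legendre's formula plus the two defining equations. From there the paper stays with valuations in two stages: it first bounds each middle multiplicity ($c_i\le\lfloor p/i\rfloor$, else $N_\lambda\equiv 0$) --- this is needed so the middle factorials $c_i!$ have zero valuation --- and only then writes $\nu_p(N_\lambda)\ge\nu_p\bigl((2n)!/(2n-\beta)!\bigr)$, a product of $\beta$ consecutive integers that must contain a multiple of $p$ once $\beta\ge p$. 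You instead factor $N_\lambda$ into three positive integers, $\binom{2n}{\beta,\,c_1,\,pc_p}\cdot\frac{(pc_p)!}{(p!)^{c_p}c_p!}\cdot N'$ with $N'$ the middle-block set-partition count, and prove $p\mid N'$ for $\beta\ge p$ by a fixed-point-free action of a $p$-cycle on those set partitions. This buys two things: the paper's bound on the $c_i$ becomes unnecessary, and the mechanism matches the paper's own Section 4, where Frobenius's theorem on solutions of $x^p=1$ in $\mathfrak{S}_n$ supplies the analogous divisibility --- so your route unifies the combinatorial halves of the argument. The Diophantine endgame (uniqueness of $c_p$ from $0\le\beta-\alpha<p-1$) is identical to the paper's. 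Two places each want one more line: in the group action, if $B\subseteq T$ and the blocks $B,g(B),\dots,g^{p-1}(B)$ are not pairwise distinct, then since $p$ is prime $B$ is $g$-invariant and equals the whole $p$-orbit, so $|B|=p$ --- also impossible, but a case your stated dichotomy skips; and your inference $\beta-\alpha\le p-2$ needs the explicit remark (which the paper makes) that $\beta>0$ forces $\alpha>0$, since $\alpha=0$ means all middle $c_i$ vanish and hence $\beta=0$, ruling out $\beta=p-1$, $\alpha=0$. Both are immediate fixes, not gaps in the approach.
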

\begin{proof}
We will find a series of successively stronger restrictions on the parts $c_i$ until we are forced to reach the lemma's conclusion.
\begin{enumerate}
\item We want to show that for $i=3,5,\ldots,p-2$, we must have $c_i \leq \left \lfloor{ \frac{p}{i}}\right \rfloor <p$. We begin by noting that $\nu_p(n!) = 0$ for $n<p$, and using Legendre's formula to write
\begin{align*}
\nu_p(N_\lambda) &= \nu_p\left(  \frac{(2n)!}{p^{c_p}\prod_{i=1}^p c_i!}  \right) \\
&= \frac{2n - s_p(2n)}{p-1} - c_p - \sum_{i=1,3,\ldots, p} \frac{c_i - s_p(c_i)}{p-1}.
\end{align*}
Hence 
\begin{align*}
(p-1)\nu_p(N_\lambda) &= {2n - s_p(2n)} - c_p(p-1) - \sum_{i=1,3,\ldots, p} ({c_i - s_p(c_i)}) \\
&= {2n - s_p(2n)} - \sum_{i=1,3,\ldots,p }c_i(i-s_p(i)) - \sum_{i=1,3,\ldots, p} ({c_i - s_p(c_i)}) \\
&= -s_p(2n) +\sum_{i=1,3,\ldots,p}c_i(s_p(i)-1) +\sum_{i=1,3,\ldots,p}s_p(c_i).
\end{align*}
Now note that $s_p(i) = i$ for $i<p$ so the middle sum simplifies, while $s_p(1)=s_p(p)=1$ so these two terms completely drop out. Now we use the inequality $$s_p\left( \sum_{i}a_i\right) \leq \sum_i s_p(a_i),$$ since $$\sum_i s_p(a_i)  - s_p\left( \sum_{i}a_i\right) = (p-1)\nu_p \binom{\sum_i a_i}{a_1,a_2,\ldots,a_n} \geq 0$$ is the multinomial generalization of Legendre's formula. Therefore, 
\begin{align*}
(p-1)\nu_p(N_\lambda) &=  -s_p\left(c_1+3c_3+\cdots+pc_p\right) +\sum_{i=3,5,\ldots,p-2}c_i(i-1) +\sum_{i=1,3,\ldots,p}s_p(c_i) \\
&\geq -\sum_{i=1,3,\ldots,p} s_p(ic_i)+\sum_{i=1,3,\ldots,p}s_p(c_i) +\sum_{i=3,5,\ldots,p-2}c_i(i-1) \\
&= \sum_{i=3,5,\ldots,p-2}c_i(i-1) +s_p(c_i) - s_p(ic_i) \\
&= (p-1)\sum_{i=3,5,\ldots,p-2} \nu_p(ic_i!) - \nu_p(c_i!).
\end{align*}
For $i=3,5,\ldots,c_{p-2}$, once $c_i = \left \lceil{ \frac{p}{i}}\right \rceil$, then $\nu_p(ic_i!) - \nu_p(c_i!)\geq 1$. Now since $i \geq 3$, any larger values of $c_i$ will also lead to the lower bound $\nu_p(ic_i!) - \nu_p(c_i!)\geq 1$. Thus in this case $\nu_p(N_\lambda)>0$ and hence $N_\lambda \equiv 0 \pmod{p}$.

\item We want to show $\beta < p$. First we require an auxiliary lemma based on Legendre's formula:
\begin{align*}
\nu_p(p^nn!) = n + \nu_p(n!) = n+ \sum_{i=1}^\infty \left \lfloor{ \frac{n}{p^i}}\right \rfloor =  \sum_{i=1}^\infty \left \lfloor{ \frac{np}{p^i}}\right \rfloor = \nu_p((np)!). 
\end{align*}
Now based on the previous bound on $c_i$, we can freely assume that $c_i < p, 3\leq i \leq p-2$, for the rest of the proof. Therefore, we know $\nu_p(c_i!)=0$ for these values of $i$. Now note that we can write $2n-\beta = c_1 + pc_p$ by the definition of $\beta$, so that we can apply our previous lemma to write
\begin{align*}
\nu_p(N_\lambda) &= \nu_p\left(  \frac{(2n)!}{p^{c_p}c_p! c_1!}  \right) = \nu_p\left(  \frac{(2n)!}{(pc_p)! c_1!}  \right) \\
&=  \nu_p\left(  \frac{(2n)!}{(2n-\beta)!}\binom{2n-\beta}{pc_p, c_1}  \right) \geq  \nu_p\left(  \frac{(2n)!}{(2n-\beta)!}  \right).
\end{align*}
Now if $\beta \geq p$ then the set of $\geq p$ consecutive integers $\{2n, 2n-1, \ldots, 2n-\beta+1\}$ will contain a member divisible by $p$, so that we can lower bound the valuation by $1$, showing that $N_\lambda \equiv 0 \pmod{p}$ in this case. Hence we must have $\beta < p$ in order to have a zero $p$-adic valuation.

\item We now show the conclusion of the lemma. Consider Equations \eqref{ceq1} and \eqref{ceq2}. We trivially have $p> \beta \geq \alpha \geq 0$. The key insight here is that we can strengthen the previous inequality to $0 \leq \beta-\alpha<p-1$. If $\beta= 0$, this forces $\alpha = 0$ and we're done. If $\beta >0$ we necessarily have $\alpha>0$ so that $\beta - \alpha < p - 1$. Subtracting the two defining equations gives
$$(p-1)c_p +\beta-\alpha = 2n-2k, $$
but since $2n-2k$ has a unique base $p-1$ representation and $\beta-\alpha<p-1$ must necessarily be the remainder $2n-2k \pmod{p-1}$, there is a \textit{unique} value of $c_p$ we care about, which is precisely $\left \lfloor{ \frac{2n-2k}{p-1}}\right \rfloor$.
\end{enumerate}
\end{proof}
What this lemma essentially says is that if $c_p$ does not dominate $\lambda$, in that it's not as large as possible, then $N_\lambda$ will vanish mod $p$. Meanwhile, the contribution from $c_1$ can be unbounded. For example, only the single partition with $c_1 = 2n$ can contribute to the sum $s_{n,n}$.

We also note that the restriction $\beta:=3c_3+5c_5+\cdots+(p-2)c_{p-2} < p$ is rather strong; it is a Diophatine inequality in the $\frac{p-3}{2}$ variables $\{c_3,\ldots,c_{p-2}\}$, which has only finitely many solutions. Therefore, for a fixed prime $p$ we define the set of \textbf{core configurations} $\mathfrak{D}_p$ as the tuples $(c_3,\ldots,c_{p-2})$ satisfying $3c_3 + 5c_5+\cdots +(p-2)c_{p-2}<p$. For example, when $p=11$ there are ten possible core configurations: 
\begin{align*}
(c_3,c_5,c_7,c_{9}) \in \{   &(0,0,0,1), (0,0,1,0), (1,0,1,0), (0,2,0,0), (0,1,0,0), \\
&(1,1,0,0), (1,0,0,0), (2,0,0,0), (3,0,0,0), (0,0,0,0) \}. 
\end{align*}
For each $\mu:= (c_3,\ldots,c_{p-2}) \in \mathfrak{D}_p$ we can then define a norm function $$N(\mu) := 3c_3+5c_5+\cdots+ (p-2)c_{p-2}$$ and length function $$\ell(\mu):=c_3+c_5+\cdots+c_{p-2}.$$


We can then present a new (rather technical) decomposition of $s(n,k) \pmod{p}$ based on these core configurations. This is where our proof begins to differ significantly from Scherer's proof of the case $p=5$.
\begin{lem}\label{sdecomp}
Fix a prime $p$ satisfying $p\equiv 1 \pmod{4}$. Let $\mathfrak{D}_p$ denote the set of core configurations, and for each $\mu \in \mathfrak{D}_p$ associate the quantity 
$$N'_\mu = \prod_{i=1}^{\frac{p-3}{2}} \frac{u(i)^{c_{2i+1}}}{ (2i+1)!^{c_{2i+1}} c_{2i+1}!},$$
Also denote $$c_p^{(k)} := \frac{n-k - \left( \frac{N(\mu)-\ell(\mu)}{2} \right)}{  \frac{p-1}{2} }.$$ 
Then we have
\begin{align*}
s(n,k) \equiv \sum_{ \substack{ \mu \in \mathfrak{D}_p \\   \frac{N(\mu)-\ell(\mu)}{2}\equiv n-k  \mod{\frac{p-1}{2}}    }} &(2n)(2n-1) \cdots (2n - N(\mu)+1) N'_\mu \\
&\times \frac{ (2n-N(\mu))!  u\left( \frac{p-1}{2} \right)^{c_p^{(k)}} }{ (-1)^{c_p^{(k)}} p^{c_p^{(k)}}c_p^{(k)}! (2n-N(\mu) -pc_p^{(k)} )! } \pmod{p}.
\end{align*}
\end{lem}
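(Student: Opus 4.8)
The plan is to begin from the partition expansion of Lemma \ref{slem} and prune the sum twice using the vanishing results already in hand, then evaluate the surviving coefficients $N_\lambda$ modulo $p$ by a Wilson-type argument. First I would invoke Lemma \ref{ulem}: since $u(\tfrac{i-1}{2}) \equiv 0 \pmod p$ whenever $\tfrac{i-1}{2} \geq \tfrac{p+1}{2}$, i.e. for every (odd) part $i \geq p+2$, only partitions $\lambda \in \mathfrak{P}_{2n,2k}$ with largest part at most $p$ survive modulo $p$. Among these, Lemma \ref{mainlem} kills $N_\lambda$ unless $(c_3,\ldots,c_{p-2})$ is a core configuration $\mu \in \mathfrak{D}_p$ (so $\beta = N(\mu) < p$, $\alpha = \ell(\mu)$) with $c_p = \lfloor\tfrac{2n-2k}{p-1}\rfloor$ maximal. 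Using \eqref{ceq1}--\eqref{ceq2} I would record that such a $\lambda$ is determined by its $\mu$, with $c_p = c_p^{(k)}$ and $c_1 = 2n - N(\mu) - p\,c_p^{(k)}$; conversely a given $\mu$ produces a genuine partition exactly when $\tfrac{N(\mu)-\ell(\mu)}{2} \equiv n-k \pmod{\tfrac{p-1}{2}}$ (forcing $c_p^{(k)} \in \Z$) and the resulting $c_1, c_p^{(k)}$ are nonnegative. This pins down the index set of the claimed sum.

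Next I would evaluate a surviving summand. For such a $\lambda$,
$$N_\lambda \prod_i u\!\left(\tfrac{i-1}{2}\right)^{c_i} = \frac{(2n)!}{c_1!\,\prod_{3\le i\le p-2}(i!^{c_i}c_i!)\,p!^{c_p}c_p!}\prod_{3\le i\le p-2} u\!\left(\tfrac{i-1}{2}\right)^{c_i} u\!\left(\tfrac{p-1}{2}\right)^{c_p}.$$
I would pull the falling factorial $\tfrac{(2n)!}{(2n-N(\mu))!} = (2n)(2n-1)\cdots(2n-N(\mu)+1)$ out front, and recognize $\tfrac{1}{\prod_{3\le i\le p-2}(i!^{c_i}c_i!)}\prod u^{c_i} = N'_\mu$ after the reindexing $i = 2j+1$. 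The heart of the matter is the $p!^{c_p}$ in the denominator: writing $p! = p\,(p-1)!$ and invoking Wilson's theorem $(p-1)! \equiv -1 \pmod p$, one has $\tfrac{1}{p!^{c_p}} = \tfrac{1}{p^{c_p}((p-1)!)^{c_p}} \equiv \tfrac{1}{(-1)^{c_p} p^{c_p}} \pmod p$. Since $\nu_p(N_\lambda)=0$ and the intermediate factorials $i!, c_i!$ $(3\le i\le p-2)$ contribute no power of $p$ (as $c_i < p$ by step 1 of Lemma \ref{mainlem}), the quantity $\tfrac{(2n)!}{c_1!\,p^{c_p}c_p!}$ is itself a $p$-adic unit, so this replacement is a legitimate congruence in $\Z_{(p)}$. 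Substituting $c_1 = 2n - N(\mu) - p\,c_p^{(k)}$, so that $c_1! = (2n-N(\mu)-p\,c_p^{(k)})!$, and reassembling reproduces exactly the claimed summand, whereupon summing over the admissible $\mu$ gives the stated formula.

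The hard part will be the $p$-adic bookkeeping in the second step: one must reduce the factor $\tfrac{1}{p!^{c_p}}$ to $\tfrac{(-1)^{c_p}}{p^{c_p}}$ modulo $p$ even though the individual factors carry negative powers of $p$, which is only legitimate because Lemma \ref{mainlem} guarantees the full coefficient is a $p$-adic unit and the Wilson reduction is then applied to the unit part alone. A secondary point requiring care is the identification of the index set -- checking that integrality of $c_p^{(k)}$ is precisely the congruence $\tfrac{N(\mu)-\ell(\mu)}{2}\equiv n-k \pmod{\tfrac{p-1}{2}}$ and discarding those $\mu$ for which $c_1$ or $c_p^{(k)}$ would turn negative (no genuine partition), so that the displayed sum ranges over exactly the contributing configurations.
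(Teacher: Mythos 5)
Your proposal is correct and follows essentially the same route as the paper's proof: prune the partition sum of Lemma \ref{slem} via Lemma \ref{ulem} and Lemma \ref{mainlem}, solve the two linear equations to express $c_1$ and $c_p$ in terms of the core configuration $\mu$ (with integrality of $c_p^{(k)}$ giving the congruence condition on $n-k$), and finish with Wilson's theorem to replace $p!^{c_p^{(k)}}$ by $(-1)^{c_p^{(k)}}p^{c_p^{(k)}}$. Your added care about nonnegativity of $c_1, c_p^{(k)}$ and about why the Wilson reduction is legitimate in $\Z_{(p)}$ (the unreduced ratio $\tfrac{(2n-N(\mu))!}{p^{c_p}c_p!\,(2n-N(\mu)-pc_p)!}$ being an integer) only makes explicit what the paper leaves implicit.
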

\begin{proof}
Begin with Lemma \ref{slem}, that $$s(n,k) = \sum_{\lambda \in \mathfrak{P}_{2n,2k}} N_\lambda \prod_{i=1}^{2n} u\left( \frac{i-1}{2} \right)^{c_i}.$$
When we reduce modulo $p$, by the vanishing $u(n)\equiv 0 \pmod{p}, n\geq \frac{p+1}{2}$, we only sum over $\lambda \in \mathfrak{P}_{2n,2k}$ with largest part $\leq p$. Furthermore, by the first result of Lemma \ref{mainlem} we can further reduce the sum to $\lambda$ satisfying $3c_3+5c_5+\cdots+ (p-2)c_{p-2}<p$. The key insight is that we can then rearrange the sum over $\lambda$ based on the value of the tuple $(c_3,c_5,\ldots,c_{p-2})$. Every $\lambda = (c_1,c_3,\ldots, c_{p})$ has its sub-tuple $(c_3,\ldots,c_{p-2})$ fall into one of the \textit{finite} number of core configurations $\mathfrak{D}_p$. 

Furthermore, given that we are identifying $\lambda \in \mathfrak{P}_{2n,2k}$ with a Diophatine solution to 
\begin{align*}
c_1+c_3+\cdots + c_p &= 2k, \\
c_1+3c_3+5c_5+\cdots + pc_p &= 2n,
\end{align*}
any $\lambda$ with core configuration $\mu \in \mathfrak{D}_p$ satisfies
$$c_1 + \ell(\mu) + c_p = 2k,\medspace\medspace c_1 + N(\mu) +pc_p = 2n.$$
Holding the core configuration $\mu$ and integer $n$ fixed, this has solution 
$$c_p = \frac{2n-2k - \left( N(\mu) - \ell(\mu)\right) }{p-1}, \medspace\medspace c_1 = 2n-N(\mu) - pc_p.$$
Since $N(\mu) - \ell(\mu) = 2c_3 + 4c_5 + \cdots + (p-3)c_{p-2} \equiv 0 \pmod{2}$, we can in fact parametrize the possible integral solutions $c_p^{(k)}$ as
$$c_p^{(k)} = \frac{n-k - \left( \frac{N(\mu)-\ell(\mu)}{2} \right)}{  \frac{p-1}{2} }, $$
where $$  n-k \equiv  \left( \frac{N(\mu)-\ell(\mu)}{2} \right) \mod{\frac{p-1}{2}} . $$
Now we note that $u(0)=1$, so that $u(0)^{c_1}=1$ always. This allows us to account for the factor $\prod_{i=1}^{p}u\left( \frac{i-1}{2} \right)^{c_i}$ by ignoring $c_1$, folding the contribution from $\{c_3,\ldots,c_{p-2}\}$ into the definition of $N'_\mu$, and explicitly specifying the contribution from $u\left( \frac{p-1}{2}\right)$. Recalling the definition of $N'_\mu$, we can then sum over core configurations first to obtain
\begin{align*}
s(n,k) \equiv \sum_{ \substack{ \mu \in \mathfrak{D}_p \\   \frac{N(\mu)-\ell(\mu)}{2}\equiv n-k  \mod{\frac{p-1}{2}}    }} &(2n)(2n-1) \cdots (2n - N(\mu)+1) N'_\mu \\
&\times \frac{ (2n-N(\mu))!  u\left( \frac{p-1}{2} \right)^{c_p^{(k)}} }{p!^{c_p^{(k)}}c_p^{(k)}! (2n-N(\mu) -pc_p^{(k)} )! } \pmod{p}.
\end{align*}
Note that every summand in the first line is constant over a core configuration $\mu$. We now appeal to Wilson's theorem, that for any prime $p$ we have $(p-1)! \equiv -1 \pmod{p}$. This lets us write 
$$\frac{1}{p!^k} \equiv \frac{1}{(-1)^k p^k} \pmod{p}, $$
which completes the proof.
\end{proof}
This decomposition essentially says that the contribution from $(c_3,\ldots,c_{p-2})$ falls into one of a fixed number of cases, so that we can isolate the contribution of $c_1,c_p$. Also note that since we specify $n,k$ at the beginning, the $\mu\in \mathfrak{D}_p$ which contribute at a single step are constant across residue classes of $n-k \pmod{ \frac{p-1}{2}}$. For the simplest possible case $p=5$, the only possible core configurations are $c_3 = 0, 1$. Then, noting that coincidentally $u(0) \equiv u(1)\equiv u(2) \equiv 1 \pmod{5}$, our lemma gives 
\begin{align*}
s(n,k) \equiv \begin{cases}   
 \dfrac{(2n)!}{  (-1)^{ \frac{n-k}{2} } \left(\frac{n-k}{2} \right)! \left(   2n- 5\left(\frac{n-k}{2} \right) \right)!}, & n-k \equiv 0 \pmod{2} \vspace{4mm} \\
 \dfrac{(2n)(2n-1)(2n-2)}{ 3! }  \times  \dfrac{(2n-3)!}{  (-1)^{ \frac{n-k-1}{2} } \left(\frac{n-k-1}{2} \right)! \left(   2n-3- 5\left(\frac{n-k-1}{2} \right) \right)!},  & n-k \equiv 1 \pmod{2}.
\end{cases}
\end{align*}
which is exactly \cite[Equation (17)]{Scherer}.

\section{Final steps}
We now take a detour through the theory of the symmetric group, which forms the last link in our proof. Given the symmetric group on $n$ letters $\mathfrak{S}_n$ and a fixed prime $p$, let $X_n^{k} \subset \mathfrak{S}_n$ denote the elements formed of $k$ $p$-cycles and $n-pk$ one-cycles. Then $$|X_n^k| = \frac{n!}{k!(n-pk)!p^k} $$ and 
$$X_n := \bigcup_{k=0}^{  \left \lfloor{  \frac{n}{p}  } \right\rfloor } X_n^{k} $$
consists of all the elements in $\mathfrak{S}_n$ of order $p$. We then appeal to an old theorem of Frobenius \cite{Frobenius}.

\begin{thm}\label{frob}
Let $G$ be a finite group with $m$ dividing $|G|$. Then $m$ divides the number of solutions in $G$ to $x^m =1$. 
\end{thm}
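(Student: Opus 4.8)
The plan is to treat this as what it is---Frobenius's classical theorem---and to prove it by an orbit-counting argument. Write $f(m) = \#\{x \in G : x^m = 1\}$ for the quantity whose divisibility by $m$ we want. Since the only use made of the theorem in this paper is for the prime $m = p$ and the group $\mathfrak{S}_n$ (there $X_n$ is exactly the solution set of $x^p = 1$), I would first give a complete elementary proof of the prime case and then indicate the extra scaffolding the general case requires.

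For $m = p$ prime, I would consider $\Omega = \{(x_1,\dots,x_p) \in G^p : x_1 x_2 \cdots x_p = 1\}$. Choosing $x_1,\dots,x_{p-1}$ freely and solving for $x_p$ gives $|\Omega| = |G|^{p-1}$. The cyclic group $C_p$ acts on $\Omega$ by the shift $(x_1,\dots,x_p) \mapsto (x_2,\dots,x_p,x_1)$, which preserves $\Omega$ because $x_2\cdots x_p x_1 = x_1^{-1}(x_1\cdots x_p)x_1 = 1$. As $p$ is prime every orbit has size $1$ or $p$, and the singleton orbits are precisely the constant tuples $(x,\dots,x)$ with $x^p = 1$, of which there are $f(p)$. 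Hence $|G|^{p-1} = |\Omega| \equiv f(p) \pmod p$, and since $p \mid |G|$ makes $|G|^{p-1} \equiv 0$, we get $p \mid f(p)$. This already yields $p \mid |X_n|$ in the application.

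For general $m$ I would run the same construction on $m$-tuples: $C_m$ acts on $\Omega_m = \{(x_1,\dots,x_m) : x_1\cdots x_m = 1\}$, and a short computation shows that the fixed points of $\sigma^j$, with $d = \gcd(j,m)$, form a set of size $|G|^{d-1} f(m/d)$. Grouping the $m$ shifts by the value $e = m/d$ (there are $\phi(e)$ indices $j$ with $\gcd(j,m) = m/e$) and applying the orbit-counting lemma gives $m \mid \sum_{e \mid m} \phi(e)\,|G|^{m/e-1} f(e)$. For every proper divisor $e$ of $m$ one has $m/e \ge 2$, so $|G|^{m/e-1}$ is a multiple of $|G|$ and hence of $m$; every such term vanishes modulo $m$, leaving just the $e = m$ term and the relation $m \mid \phi(m)\,f(m)$.

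The hard part is this last step: when $m$ is not prime $\gcd(m,\phi(m)) > 1$ in general (already for $m = p^a$ with $a \ge 2$), so $m \mid \phi(m) f(m)$ only delivers $\tfrac{m}{\gcd(m,\phi(m))} \mid f(m)$, not the full divisibility. Removing this spurious factor is the genuine content of Frobenius's theorem and is where I expect the real work to lie; the single orbit count above is not enough. I would close the gap by the finer induction underlying Frobenius's original argument, reorganizing the count through exact orders via $f(m) = \sum_{d \mid m} g(d)$ with $g(d) = \#\{x : \mathrm{ord}(x) = d\}$ and bootstrapping the prime-power layers by strong induction on $m$, so that the inductive hypothesis upgrades $\tfrac{m}{\gcd(m,\phi(m))} \mid f(m)$ to $m \mid f(m)$. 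For the purposes of this paper, however, only the cleanly settled prime case is needed.
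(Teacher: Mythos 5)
The paper never actually proves this statement: it is quoted as Frobenius's classical theorem and supported only by the citation \cite{Frobenius}, so there is no internal proof to compare yours against. Judged on its own terms, your proposal splits into two parts of very different status. The prime case is complete and correct: the cyclic shift action on $\Omega = \{(x_1,\dots,x_p) : x_1\cdots x_p = 1\}$, the count $|\Omega| = |G|^{p-1}$, the identification of singleton orbits with solutions of $x^p = 1$, and the congruence $f(p) \equiv |G|^{p-1} \equiv 0 \pmod{p}$ are all right. Since the paper applies Theorem \ref{frob} only with $m = p$ prime and $G = \mathfrak{S}_n$ (to conclude $p \mid |X_n|$), this part alone suffices for every use made of the theorem in the text.

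For the general statement as written, however, there is a genuine gap, which you yourself flag: Burnside's lemma applied to the $C_m$-action yields only $m \mid \phi(m)\, f(m)$, which is strictly weaker than $m \mid f(m)$ whenever $\gcd(m,\phi(m)) > 1$ (already for $m = p^a$ with $a \geq 2$). The promised repair --- decomposing $f(m) = \sum_{d \mid m} g(d)$ by exact orders and ``bootstrapping the prime-power layers by strong induction'' --- is not carried out, and it would not go through as described: the inductive hypothesis controls $f(e)$ for proper divisors $e \mid m$, but those terms have already vanished modulo $m$ in your congruence (each is multiplied by $|G|^{m/e-1}$), so the divisor decomposition extracts no new information about $f(m)$ itself. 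Frobenius's theorem in full generality requires a genuinely different induction --- classically on the order of $G$, passing through subgroups and the class equation --- not a refinement of the single tuple count. In short: your argument is correct and sufficient for everything this paper needs, and you are commendably honest about where it stops, but it is incomplete as a proof of the theorem as stated.
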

Applying this to $\mathfrak{S}_n$, with $p\leq n$, gives $$ |X_n| = \sum_{k=0}^{  \left \lfloor{  \frac{n}{p}  } \right\rfloor } |X_n^{k}| = \sum_{k=0}^{  \left \lfloor{  \frac{n}{p}  } \right\rfloor }  \frac{n!}{k!(n-pk)!p^k}  \equiv 0 \pmod{p}.$$
The reason we are interested in such sums is that they're in \textit{almost} the same form as the inner sums in Lemma \ref{sdecomp}. We first require a change of variables argument discovered numerically; the subtlety is that due to the occurrence of various floor and ceiling functions, we must verify it for each residue class $n \pmod{p}$ separately. 

\begin{lem}\label{varchange}
Fix any prime $p$. If $0<\gamma <p$ and $0< \delta\leq \left\lfloor \frac{\gamma-1}{2} \right\rfloor$, or if $\gamma=\delta =0$, then 
$$\sum_{k=0}^{  \left \lfloor{  \frac{2n- \gamma}{p}  } \right\rfloor } |X_{2n - \gamma}^{k}| =  \sum_{  \substack{  k= \left \lceil{  \frac{n}{p}  } \right\rceil  \\   n-k \equiv \delta \mod \frac{p-1}{2} }}^n |X_{2n-\gamma}^{  \frac{2(n-k-\delta)}{  p-1 }  }|.$$
\end{lem}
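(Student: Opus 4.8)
The plan is to read both sides as sums of a single quantity, $|X_{2n-\gamma}^{e}| = \frac{(2n-\gamma)!}{e!\,(2n-\gamma-pe)!\,p^{e}}$, indexed over two compatible sets of exponents $e$, and to match them up by an explicit reindexing. The key structural fact I would lean on is that $|X_{2n-\gamma}^{e}|$ is a genuine count, hence vanishes identically whenever $pe>2n-\gamma$. The left side is already $\sum_{e=0}^{\lfloor (2n-\gamma)/p\rfloor}|X_{2n-\gamma}^{e}|$, i.e.\ the sum over exactly those $e$ for which the term can be nonzero.

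First I would make the change of variable precise. Put $e := \frac{2(n-k-\delta)}{p-1}$; this is a nonnegative integer exactly when $n-k\equiv \delta \pmod{\frac{p-1}{2}}$ and $k\le n-\delta$, the inverse being $k = n-\delta-\frac{(p-1)e}{2}$. Since $\delta\le\lfloor\frac{\gamma-1}{2}\rfloor\le\frac{p-3}{2}<\frac{p-1}{2}$, the value $k=n-\delta$ is the largest admissible $k$, so $e=0$ is attained. As $k$ decreases through the summation set on the right — those $k$ with $\lceil n/p\rceil\le k\le n$ and $n-k\equiv\delta\pmod{\frac{p-1}{2}}$ — the exponent $e$ runs through the consecutive block $\{0,1,\dots,J\}$, where the lower constraint $k\ge\lceil n/p\rceil$ forces
$$ J = \left\lfloor\frac{2\left(n-\delta-\lceil n/p\rceil\right)}{p-1}\right\rfloor. $$
Hence the right side equals $\sum_{e=0}^{J}|X_{2n-\gamma}^{e}|$, and the two sides agree as soon as $J\ge\lfloor (2n-\gamma)/p\rfloor$: every extra exponent $\lfloor(2n-\gamma)/p\rfloor<e\le J$ satisfies $pe>2n-\gamma$, so its term is $0$ and contributes nothing.

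It therefore remains to prove the arithmetic inequality $J\ge\lfloor(2n-\gamma)/p\rfloor$, and this is exactly where the residue of $n \pmod p$ enters. Writing $n=ap+b$ with $0\le b<p$ and stripping the common $2a$ from both floors, the case $b=0$ gives equality directly, while for $1\le b\le p-1$ the claim reduces to
$$ \left\lfloor\frac{2(b-\delta-1)}{p-1}\right\rfloor \ \ge\ \left\lfloor\frac{2b-\gamma}{p}\right\rfloor. $$
Under $\gamma<p$ and $\delta\le\lfloor\frac{\gamma-1}{2}\rfloor$ both floors take values only in $\{-1,0,1\}$, so I would finish by comparing, as step functions of $b$, the thresholds at which each floor jumps from $-1$ to $0$ and from $0$ to $1$: the left floor jumps at $b=\delta+1$ and $b=\delta+\frac{p+1}{2}$, the right floor at $b=\lfloor\frac{\gamma-1}{2}\rfloor+1$ and $b=\lceil\frac{p+\gamma}{2}\rceil$, and the hypothesis $\delta\le\lfloor\frac{\gamma-1}{2}\rfloor$ is precisely what places each left jump at or before the corresponding right jump, yielding the inequality for every $b$. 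The separate case $\gamma=\delta=0$ is easier, giving exact equality of the two ranges. The real obstacle is this last bookkeeping: keeping the floor and ceiling estimates exact across all residues $b$, where the parity of $p+\gamma$ and the freedom in $\delta$ within $[1,\lfloor\frac{\gamma-1}{2}\rfloor]$ both shift the jump thresholds, so the argument is a careful residue-by-residue verification rather than a single computation — which is why the statement is phrased as one that must be checked for each residue class of $n\pmod p$. A minor preliminary check, needed only so that the reindexing starts at $e=0$, is that $n-\delta\ge\lceil n/p\rceil$, which holds throughout the range in which the lemma is applied.
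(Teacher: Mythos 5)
Your proposal is correct and follows essentially the same route as the paper: both arguments reduce the identity to the covering inequality $\left\lfloor \frac{2(n-\delta-\lceil n/p\rceil)}{p-1}\right\rfloor \geq \left\lfloor \frac{2n-\gamma}{p}\right\rfloor$ by observing that $|X_{2n-\gamma}^{e}|$, being a genuine count, vanishes once $pe > 2n-\gamma$, and then verify that inequality residue class by residue class for $n \pmod{p}$. Your jump-threshold comparison of the two step functions is a more uniform packaging of the paper's seven explicit subcases, but the underlying reduction and computation are the same.
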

\begin{proof}
Consider the right--hand side sum, and consider $k=n$ and start counting downwards. The largest $k$ that will satisfy the given congruence condition is $k = n- \delta $, for which $ \frac{2(n-k-\delta)}{  p-1 } = 0.$ As $k$ keeps decreasing by multiples of $ \frac{p-1}{2}$, $\frac{2(n-k-\delta)}{  p-1 } $ will increase to $1$, then $2$, and so on. Therefore, the right hand side will sum over $\{X_{2n-\gamma}^0,X_{2n-\gamma}^1,\ldots\}$, and we just have to verify that it halts at the correct step. Note that trivially, we have $|X|^k_{n} = 0$ when $k > \left\lfloor \frac{n}{p}\right\rfloor$, since there are no elements of $\mathfrak{S}_n$ with that many $p$-cycles, so that it's fine to sum past $X_{2n-\gamma}^{\left \lfloor{  \frac{2n- \gamma}{p}  } \right\rfloor}$ -- we just need to make sure that the smallest value of $k$ on the right hand side has summed over this term. Therefore, our equality is equivalent to instead showing the inequality
\begin{equation}\label{ineq}
\frac{2\left(n- \left \lceil{  \frac{n}{p}  } \right\rceil   -\delta \right)}{  p-1 } -  \left \lfloor{  \frac{2n- \gamma}{p}  } \right\rfloor \geq 0
\end{equation}
for the given values of $\gamma,\delta$. We will do this with a rather annoying verification based on the residue class of $n \pmod{p}$.

\textbf{Case 1: $\gamma=\delta = 0$}.
\begin{itemize}
\item \textbf{Subcase 1a: $n \equiv 0 \pmod{p}$}. Then $n = pl$ for some $l$, and \eqref{ineq} reduces to
$$\frac{2 (pl-l)}{p-1}- 2l =0. $$
\item \textbf{Subcase 1b: $n \equiv i \pmod{p}, 0<i\leq \frac{p-1}{2}$}. Then $n = pl+i$ for some $l$, and \eqref{ineq} reduces to $$\frac{2( pl+i -(l+1) )}{p-1} - 2l = \frac{2(i-1)}{p-1} \geq 0.$$
\item \textbf{Subcase 1c: $n \equiv i \pmod{p}, \frac{p+1}{2}\leq i\leq p-1$}. Then $n = pl+i$ for some $l$, and \eqref{ineq} reduces to $$\frac{2( pl+i -(l+1) )}{p-1} - (2l+1) = \frac{2(i-1) -(p-1 ) }{p-1} \geq 0.$$
\end{itemize}

\textbf{Case 2: $\gamma>0$}.
\begin{itemize}
\item \textbf{Subcase 2a: $n \equiv 0 \pmod{p}$}. Then $n = pl$ for some $l$, and \eqref{ineq} reduces to
$$\frac{2 (pl-l-\delta)}{p-1}- (2l-1) = 1 - \frac{2\delta}{p-1}. $$
However, $\gamma\leq p-1$ so \begin{equation}\label{delineq}\delta\leq \left\lfloor \frac{\gamma-1}{2} \right\rfloor\leq \frac{p-1}{2},\end{equation} and we're done.
\item \textbf{Subcase 2b: $n \equiv i \pmod{p}, 0<i < \frac{\gamma}{2}$}. Then $n = pl+i$ for some $l$, and \eqref{ineq} reduces to $$\frac{2( pl+i -(l+1)-\delta )}{p-1} - (2l-1) = 1+ \frac{2(  i-1-\delta )}{p-1} = \frac{2}{p-1}\left( (i-1) + \left( \frac{p-1}{2} -\delta \right)  \right) .$$
However, in this range we know $i \geq 1$ and $\delta \leq \frac{p-1}{2}$ (from \eqref{delineq}), so the whole line is $\geq 0$.
\item \textbf{Subcase 2c: $n \equiv i \pmod{p}, \frac{\gamma}{2} \leq i\leq \frac{p+\gamma-1}{2}$}. Then $n = pl+i$ for some $l$, and \eqref{ineq} reduces to $$\frac{2( pl+i -(l+1)-\delta )}{p-1} - 2l = \frac{2(i-1 -\delta) }{p-1} .$$
Now we again see the reason for the restriction $\delta \leq  \left\lfloor \frac{\gamma-1}{2} \right\rfloor$; this is precisely equivalent to $i-1-\delta \geq 0$ while $i\geq \frac{\gamma}{2}$.
\item \textbf{Subcase 2d: $n \equiv i \pmod{p}, \frac{p+\gamma-1}{2} < i\leq p-1$}. Then $n = pl+i$ for some $l$, and \eqref{ineq} reduces to $$\frac{2( pl+i -(l+1)-\delta )}{p-1} - (2l+1).$$Under the mapping $i \mapsto i-\frac{p-1}{2}$, this is exactly equivalent to the previous case, and the upper limit of the range of $i$ becomes $i \leq \frac{p-1}{2}-1 \leq \frac{p +\gamma-1}{2}$, since $\gamma>0$. Therefore, an application of Case (2c) completes the proof.
\end{itemize}

\end{proof}

For the $p=5$ case, Scherer \cite[Lemma 15]{Scherer} implicitly required the $\gamma=\delta=0$ and $(\delta,\gamma)=(1,3)$ cases of this lemma. The reason for this lemma is that we can show the following, rather technical, congruence.
\begin{cor}
Fix any prime $p$. If $0<\gamma <p$ and $0\leq \delta\leq \left\lfloor \frac{\gamma-1}{2} \right\rfloor$, or if $\gamma=\delta =0$, we have
$$\sum_{  \substack{  k= \left \lceil{  \frac{n}{p}  } \right\rceil  \\   n-k \equiv \delta \mod \frac{p-1}{2} }}^n \frac{\left(2n-\gamma\right)!}{ \left(\frac{2(n-k-\delta)}{  p-1 }\right)!  \left( 2n -\gamma - p\cdot \frac{2(n-k-\delta)}{  p-1 }  \right)! p^{\frac{2(n-k-\delta)}{  p-1 }} } \equiv 0 \pmod{p}.$$
\end{cor}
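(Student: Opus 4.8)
The plan is to recognize the summand as a count of permutations and then chain together the two group-theoretic tools developed just above. Writing $j := \frac{2(n-k-\delta)}{p-1}$, the congruence condition $n-k \equiv \delta \pmod{\frac{p-1}{2}}$ guarantees that $j$ is a non-negative integer, and comparing with the definition $|X_m^j| = \frac{m!}{j!(m-pj)!p^j}$ with $m = 2n-\gamma$ shows that the summand is exactly $|X_{2n-\gamma}^j|$. Thus the left-hand side of the corollary is precisely the right-hand side of Lemma \ref{varchange}.

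First I would apply Lemma \ref{varchange} to replace the sum by $\sum_{k=0}^{\lfloor (2n-\gamma)/p \rfloor} |X_{2n-\gamma}^k|$, which by the definition $X_m = \bigcup_k X_m^k$ is simply $|X_{2n-\gamma}|$, the number of elements of order dividing $p$ in the symmetric group $\mathfrak{S}_{2n-\gamma}$. Second, I would invoke Frobenius's theorem (Theorem \ref{frob}) with $G = \mathfrak{S}_{2n-\gamma}$ and $m = p$: since $p \mid (2n-\gamma)! = |G|$, the number of solutions to $x^p = 1$ in $G$ --- which is exactly $|X_{2n-\gamma}|$ --- is divisible by $p$. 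This gives the claimed congruence immediately.

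The two points requiring care are both mild. The application of Frobenius requires $p \mid (2n-\gamma)!$, i.e.\ $2n-\gamma \geq p$; for the finitely many small $n$ where this fails one has $|X_{2n-\gamma}| = 1$ (only the identity), but this regime is irrelevant to the eventual periodicity statement for $d(n)$, so it can be dispatched with a remark or an implicit hypothesis that $n$ is large. More substantively, the corollary is stated for $0 \leq \delta \leq \lfloor \frac{\gamma-1}{2}\rfloor$ whereas Lemma \ref{varchange} is stated only for $0 < \delta$ in the case $\gamma > 0$; I would check that the endpoint case $\delta = 0$, $\gamma > 0$ is still covered by inspecting inequality \eqref{ineq} in the proof of Lemma \ref{varchange}, where every subcase of Case 2 remains valid at $\delta = 0$ (each reduces to a quantity that is manifestly $\geq 0$ when $\delta = 0$). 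This is the only genuine gap, and it is routine to verify. The essential idea --- and the reason the whole argument works --- is the reindexing of Lemma \ref{varchange}, which is precisely what lets an apparently arbitrary congruence-restricted sum collapse onto a complete Frobenius count that is forced to vanish modulo $p$.
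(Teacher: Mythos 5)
Your proposal is correct and follows exactly the paper's own route: the paper's entire proof reads ``A straightforward application of Lemma \ref{varchange} followed by Theorem \ref{frob},'' which is precisely your identification of the summand as $|X_{2n-\gamma}^{j}|$, the reindexing via Lemma \ref{varchange}, and the Frobenius count. Your two caveats (the $\delta=0$, $\gamma>0$ endpoint not literally covered by the lemma's stated hypotheses, and the need for $2n-\gamma\geq p$ in Frobenius) are real gaps in the paper's one-line proof that you handle more carefully than the author does.
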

\begin{proof}
A straightforward application of Lemma \ref{varchange} followed by Theorem \ref{frob}.
\end{proof}

We've now built up enough machinery to inductively prove the main theorem. Throughout the rest of this section, denote 
$$\Pi(p):=   3^27^2\cdots (2p-3)^2 = \prod_{i=1}^{\frac{p-1}{2}}(4i-1)^2.$$

The last key ingredient we require is a higher order analog of \cite[Lemma 14]{Scherer}: for $n\geq 3$,
$$\sum_{ \substack{ k=1 \\ k \text{\medspace even}} }^n  2^{n-k} s(n,k) \equiv \sum_{ \substack{ k=1 \\ k \text{\medspace odd}} }^n  2^{n-k} s(n,k)  \equiv 0 \pmod{5}.$$ 
Note that when $p=5$, $-2^{\frac{p-1}{2}}  \Pi(p) \equiv 1 \pmod{5}$ and we perfectly recover the above result.
\begin{lem}\label{rowsum}
For a fixed residue class $l \mod{ \frac{p-1}{2}}$, we have the vanishing $$\sum_{\substack{ k=1 \\ k \equiv l \mod{  \frac{p-1}{2} } }}^n 2^{n-k}s(n,k)  \left(-2^{\frac{p-1}{2}}  \Pi(p)\right)^{ \frac{2(k-l)}{p-1} }   \equiv 0 \pmod{p}. $$
\end{lem}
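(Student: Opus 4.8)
The plan is to substitute the decomposition of $s(n,k)$ from Lemma \ref{sdecomp} into the sum, swap the order of summation so that the core configuration $\mu \in \mathfrak{D}_p$ is summed last, and recognize the resulting inner sum over $k$ as an instance of the Corollary following Lemma \ref{varchange}. First I would insert the formula of Lemma \ref{sdecomp}, writing $\gamma := N(\mu)$ and $\delta := \frac{N(\mu)-\ell(\mu)}{2}$ for each core configuration. After interchanging the two summations, a fixed $\mu$ receives contributions only from those $k$ satisfying both $k \equiv l$ and $\frac{N(\mu)-\ell(\mu)}{2}\equiv n-k \pmod{\frac{p-1}{2}}$; these congruences are simultaneously solvable precisely when $\delta \equiv n-l \pmod{\frac{p-1}{2}}$, and for such $\mu$ the exponent $c_p^{(k)}$ equals $\frac{2(n-k-\delta)}{p-1}$, exactly the shape appearing in the Corollary. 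I would also record that, since $\ell(\mu)=\gamma-2\delta$ with $\ell(\mu)\geq 1$ whenever $\gamma>0$ (and $\gamma=\delta=0$ for the empty configuration), every core configuration automatically meets the hypotheses $0\leq\delta\leq\lfloor\frac{\gamma-1}{2}\rfloor$ of that Corollary, the bound $N(\mu)=\gamma<p$ forcing $\delta$ to lie in $[0,\frac{p-1}{2})$.

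The heart of the computation is to show that all $k$-dependent factors outside the factorial ratio collapse to a constant. Using $u\!\left(\frac{p-1}{2}\right)\equiv \Pi(p) \pmod p$ from \eqref{uendpoint}, the extra factors attached to each $k$ are $2^{n-k}\,(-1)^{c_p^{(k)}}\,\Pi(p)^{c_p^{(k)}}\bigl(-2^{(p-1)/2}\Pi(p)\bigr)^{2(k-l)/(p-1)}$. The key telescoping identity is $c_p^{(k)}+\frac{2(k-l)}{p-1}=\frac{2(n-l-\delta)}{p-1}=:M$, which is independent of $k$. Consequently the powers of $2$ reduce to $2^{n-k}\cdot 2^{k-l}=2^{n-l}$, the powers of $\Pi(p)$ combine to $\Pi(p)^{M}$, and the signs combine to $(-1)^{M}$, so the whole prefactor equals the $k$-independent constant $2^{n-l}(-\Pi(p))^{M}$. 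Pulling this constant, together with the likewise constant quantities $(2n)(2n-1)\cdots(2n-\gamma+1)$ and $N'_\mu$, out of the inner sum leaves exactly $\sum_{k\equiv l}\frac{(2n-\gamma)!}{c_p^{(k)}!\,(2n-\gamma-p\,c_p^{(k)})!\,p^{c_p^{(k)}}}$, which is the left-hand side of the Corollary and therefore $\equiv 0 \pmod p$. Summing the vanishing contributions over all contributing $\mu$ yields the claim.

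I expect the main obstacle to be bookkeeping around the range of summation rather than any deep idea: our sum runs over $1\leq k\leq n$, whereas the Corollary sums from $k=\lceil n/p\rceil$, so I must confirm that the omitted or adjoined terms are genuinely zero. This follows because the factorial ratio equals $|X_{2n-\gamma}^{c_p^{(k)}}|$, which vanishes unless $0\leq c_p^{(k)}\leq\lfloor\frac{2n-\gamma}{p}\rfloor$; indeed the inequality \eqref{ineq} proved inside Lemma \ref{varchange} is exactly what certifies that $k<\lceil n/p\rceil$ forces $c_p^{(k)}>\lfloor\frac{2n-\gamma}{p}\rfloor$, while $k>n-\delta$ forces $c_p^{(k)}<0$. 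A secondary point requiring care is the sign manipulation $(-1)^{-c_p^{(k)}}=(-1)^{c_p^{(k)}}$ together with the consistent interpretation of the exponents $\frac{2(k-l)}{p-1}$ as the nonnegative integers they become under the congruence $k\equiv l \pmod{\frac{p-1}{2}}$.
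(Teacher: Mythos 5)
Your proposal is correct and follows essentially the same route as the paper's own proof: insert Lemma \ref{sdecomp}, interchange the sums, collapse the $k$-dependent prefactor to the constant $2^{n-l}(-\Pi(p))^{M}$ (the paper performs the identical computation by writing $k=l+m\cdot\frac{p-1}{2}$ so that $c_p^{(k)}=c_p^{(l)}-m$, with $c_p^{(l)}=M$), verify $\delta\leq\left\lfloor\frac{\gamma-1}{2}\right\rfloor$ via $\ell(\mu)\geq1$ when $N(\mu)>0$, and apply the Frobenius-based Corollary with $\gamma=N(\mu)$, $\delta=\frac{N(\mu)-\ell(\mu)}{2}$. Your explicit check that the terms with $k<\left\lceil n/p\right\rceil$ vanish is handled in the paper by observing that the partition equations force $k\geq\left\lceil n/p\right\rceil$, but the two observations amount to the same bookkeeping.
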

\begin{proof}
We insert the decomposition \eqref{sdecomp} of $s(n,k)$ and switching the order of summation, while noting that since $k \equiv l$ that we only sum over the core configurations with $\frac{N(\mu)-\ell(\mu)}{2} \equiv n-k \equiv n-l \pmod{ \frac{p-1}{2} }$. Then we have
\begin{align}
\sum_{\substack{ k=1 \\ k \equiv l \mod{  \frac{p-1}{2} } }}^n &2^{n-k}s(n,k)  (-2^{\frac{p-1}{2}} \Pi(p))^{ \frac{2(k-l)}{p-1} } = 
 \sum_{ \substack{ \mu \in \mathfrak{D}_p   \\ \frac{N(\mu)-\ell(\mu)}{2}\equiv n-l  \mod{\frac{p-1}{2}}    }} (2n)(2n-1) \cdots (2n - N(\mu)+1) N'_\mu \nonumber \\
&  \times \sum_{\substack{ k=1 \\ k \equiv l \mod{  \frac{p-1}{2} } }}^n 2^{n-k} (-2^{\frac{p-1}{2}} \Pi(p))^{ \frac{2(k-l)}{p-1} } \frac{ (2n-N(\mu))!  u\left( \frac{p-1}{2} \right)^{c_p^{(k)}} }{ (-1)^{c_p^{(k)}} p^{c_p^{(k)}}c_p^{(k)}! (2n-N(\mu) -pc_p^{(k)} )! } \label{inner},
\end{align}
where as before
$$N'_\mu := \prod_{i=1}^{\frac{p-3}{2}} \frac{u(i)^{c_{2i+1}}}{ (2i+1)!^{c_{2i+1}} c_{2i+1}!}$$
and 
$$c_p^{(k)} := \frac{n-k - \left( \frac{N(\mu)-\ell(\mu)}{2} \right)}{  \frac{p-1}{2} }.$$ 
We will now show that each of the individual inner sums \eqref{inner} vanish. For what follows, note that by \eqref{uendpoint} we have $$u\left({\frac{p-1}{2}}\right) \equiv   3^27^2\cdots (2p-3)^2 = \Pi(p)  \pmod{p}.$$
Note that over the residue class $k\equiv l \mod{ \frac{p-1}{2}}$, we have $k = l+ m \cdot \frac{p-1}{2}$ for some integer $m$, so that $c_p^{(k)} = c_p^{(l)}-m$. Then 
\begin{align*}
2^{n-k}(-2^{\frac{p-1}{2}} \Pi(p))^{ \frac{2(k-l)}{p-1} }  &u\left( \frac{p-1}{2} \right)^{c_p^{(k)}} (-1)^{c_p^{(k)}} \\
&= 2^{n-l -m \cdot \frac{p-1}{2}}\left( -2^{\frac{p-1}{2}}\Pi(p)\right)^m \Pi(p)^{ c_p^{(l)}-m }(-1)^{ c_p^{(l)}-m } \\
&= 2^{n-l} \Pi(p)^{ c_p^{(l)}}(-1)^{ c_p^{(l)}} 
\end{align*}
is a \textit{constant} which only depends on $l$. The key insight is that as a constant function of $l$, this whole term can be factored out of the inner sum, so that line \eqref{inner} reduces to
\begin{equation}\label{inner2}
2^{n-l} \Pi(p)^{ c_p^{(l)}}(-1)^{ c_p^{(l)}}   \sum_{\substack{ k=1 \\ k \equiv l \mod{  \frac{p-1}{2} } }}^n  \frac{ (2n-N(\mu))! }{  p^{c_p^{(k)}}c_p^{(k)}! (2n-N(\mu) -pc_p^{(k)} )! }.
\end{equation}
Now remember that we originally reduced our problem to sum over core configurations satisfying $N(\mu)<p$. Furthermore, if $N(\mu)=0$ this forces $\ell(\mu)=0$. If $0<N(\mu)<p$, we must necessarily have $\ell(\mu) >0$ so that $ \frac{N(\mu)-\ell(\mu)}{2}\leq  \frac{N(\mu)-1}{2} $, which we can strengthen to $$ \frac{N(\mu)-\ell(\mu)}{2}\leq \left\lfloor \frac{N(\mu)-1}{2} \right \rfloor$$
since we know that the left--hand side is always an integer.

Note that by multiplying \eqref{ceq1} by $p$ and comparing coefficients of $c_i$ we trivially have $2n \leq 2pk$, i.e. we have $k \geq \frac{n}{p} $, which by the integrality of $k$ is equivalent to $k \geq \left\lceil \frac{n}{p} \right\rceil $. Therefore, the smallest value of $k$ which contributes to the sum is $k = \left\lceil \frac{n}{p} \right\rceil$. Hence, recalling the definition of $c_p^{(k)}$, we have satisfied the conditions of the group theoretic Lemma \ref{varchange}, and can directly apply it with $\gamma =N(\mu) $ and $\delta =  \frac{N(\mu)-\ell(\mu)}{2}$. Therefore, line \eqref{inner2} \textit{simultaneously vanishes} for every relevant core configuration.
\end{proof}

And now, many pages later, we are finally in a position to strengthen Romik's periodicity conjecture! Furthermore, note that by Euler's criterion and the law of quadratic reciprocity \cite[Chapter 9]{Apostol} $$2^{\frac{p-1}{2}} \equiv \left(\frac{2}{p}\right) = \begin{cases} 1 & p\equiv \pm 1 \pmod{8}, \\-1 & p\equiv \pm 3 \pmod{8},  \end{cases}$$
where $ \left(\frac{2}{p}\right) $ is a Legendre symbol. Therefore, we understand exactly how the $2^{\frac{p-1}{2}}$ factor behaves.
\begin{thm}
Consider a prime $p \equiv 1 \mod4$. Then for $n\geq \frac{p+1}{2}$,
\begin{equation}\label{drecur}
d\left( n+ \frac{p-1}{2} \right) \equiv -2^{\frac{p-1}{2}}  \Pi(p) d(n)  \pmod{p} . 
\end{equation}
\end{thm}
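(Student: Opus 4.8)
The plan is to prove, by strong induction on $m$, the slightly stronger statement
\[
d\!\left(m+\frac{p-1}{2}\right)\equiv -2^{\frac{p-1}{2}}\Pi(p)\,d(m)\pmod p\qquad\text{for every }m\ge 1,
\]
of which the theorem is the case $m\ge\frac{p+1}{2}$. Write $q=\frac{p-1}{2}$ and $w=-2^{q}\Pi(p)$, and recall from \eqref{uendpoint} that $u(q)\equiv\Pi(p)$. First I would put the defining recurrence of Definition \ref{dndef} into homogeneous form: since $s(n,n)=1$ and, for $N=m+q\ge\frac{p+1}{2}$, Lemma \ref{ulem} gives $v(N)\equiv 0$, the recurrence collapses to
\[
\sum_{k=1}^{N}2^{N-k}s(N,k)\,d(k)\equiv 0\pmod p,
\]
which I call $(\star)$ and apply at level $N=m+q$.

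The heart of the argument is to organise $(\star)$ by the residue of $k$ modulo $q$ and feed in the inductive hypothesis. For $q+1\le k\le N-1$ the hypothesis applies to the smaller index $k-q$, and iterating it yields $d(k)\equiv w^{(k-l)/q}d(l)$, where $l\in\{1,\dots,q\}$ is the residue of $k$; the only terms left unreduced are the anchors $d(1),\dots,d(q)$ and the top term $d(N)$. The decisive point is that the accumulated power $w^{(k-l)/q}=w^{\frac{2(k-l)}{p-1}}$ is exactly the weight attached to $s(N,k)$ in Lemma \ref{rowsum}. Thus, after substitution, the coefficient of each anchor $d(l)$ in $(\star)$ is precisely the weighted row sum of Lemma \ref{rowsum} with its $k=N$ term deleted. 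For every class $l\not\equiv N$ this row sum is complete and vanishes by Lemma \ref{rowsum}; for the single class $l_0\equiv N\pmod q$ only the top term is missing, so (using $s(N,N)=1$) the coefficient of $d(l_0)$ equals $-w^{(N-l_0)/q}$, and $(\star)$ reduces to $d(N)\equiv w^{(N-l_0)/q}d(l_0)$. Reducing $d(m)$ to the same anchor gives $d(m)\equiv w^{(m-l_0)/q}d(l_0)$, and since $N-m=q$ these combine to $d(N)\equiv w\,d(m)$. The base case $m=1$ is immediate: there $N=\frac{p+1}{2}$, the reduction range is empty, and Lemma \ref{rowsum} applied to the class $\{1,\frac{p+1}{2}\}$ gives $d(\frac{p+1}{2})\equiv w\,d(1)$ with no hypothesis.

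The main obstacle is conceptual bookkeeping rather than a single hard estimate. One must first recognise that the induction has to descend all the way to $m=1$, so that the reductions terminate at the small residues $\{1,\dots,q\}$ as the \emph{only} anchors; if one halts the induction at $m=\frac{p+1}{2}$, then a residual combination $\sum_{l=1}^{q}2^{N-l}s(N,l)\bigl(d(l)-w^{-1}d(l+q)\bigr)$ of uncontrolled small-index terms survives and blocks the argument. One must also check that the power of $w$ produced by collapsing $d(k)$ to its anchor matches the weight in Lemma \ref{rowsum} identically, and peel off the lone $k=N$ term cleanly; this last point is exactly the floor/ceiling boundary phenomenon that forced the residue-by-residue case analysis in Lemma \ref{varchange}. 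Once it is handled, Lemmas \ref{ulem} and \ref{rowsum} supply all of the arithmetic and the theorem follows.
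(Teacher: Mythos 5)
Your proposal is correct and follows essentially the same route as the paper's own proof: use Lemma \ref{ulem} to drop $v(N)$, homogenize the recurrence into $\sum_{k=1}^{N}2^{N-k}s(N,k)d(k)\equiv 0$, collapse each $d(k)$ to an anchor by iterating the inductive relation, and let Lemma \ref{rowsum} annihilate every complete weighted row sum, so that only the residue class of $N$ survives and yields $d(N)\equiv w\,d(m)$. In fact your bookkeeping is \emph{more} careful than the paper's: the paper states its inductive hypothesis only for indices $\geq\frac{p+1}{2}$ yet iterates it down to anchors $d(0),\dots,d\bigl(\frac{p-3}{2}\bigr)$, and anchoring the class $k\equiv 0 \pmod{\frac{p-1}{2}}$ at $d(0)$ is not legitimate since $d(0)$ never appears in the recurrence (e.g.\ for $p=5$ one has $d(2)\equiv 4$ but $C\,d(0)\equiv 1$ modulo $5$), whereas your choice of anchors $\{1,\dots,\frac{p-1}{2}\}$ together with an induction descending to $m=1$ is exactly the repair that makes the argument airtight.
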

\begin{proof}
We can now proceed by induction, and assume that the result holds for $d(l),\frac{p+1}{2}\leq l<n$. Recall that the recursive definition of $d(n)$ was  
$$d(n) = v(n) - \sum_{k=1}^{n-1}  2^{n-k}s(n,k)d(k).$$
By Lemma \ref{ulem}, $v(n)\equiv 0$ for $n \geq \frac{p+1}{2}$ so that we can safely neglect this term. Now for a given $k$, we write it in its base $\frac{p-1}{2}$ expansion as $k = l+ \frac{2(k-l)}{p-1} \cdot \frac{p-1}{2}$, with $l<\frac{p-1}{2}$. Then for $n\geq \frac{p+1}{2}$, we again write the base $\frac{p-1}{2}$ expansion $n = n_0 + n_1 \frac{p-1}{2}$, with $n_0 < \frac{p-1}{2}$, so that by iterating the inductive hypothesis
\begin{align*}
d(n) &\equiv -\sum_{k=1}^{n-1}  2^{n-k}s(n,k)d(k)  \pmod{p}\\
&\equiv -\sum_{l=0}^{\frac{p-3}{2}} \sum_{  \substack{k =1 \\ k \equiv l \pmod{ \frac{p-1}{2} }} }^{n-1}2^{n-k} s(n,k) d(l)\left(-2^{\frac{p-1}{2}}  \Pi(p)\right)^{ \frac{2(k-l)}{p-1} }  \pmod{p}\\
&= s(n,n)d(n_0) \left(-2^{\frac{p-1}{2}}  \Pi(p)\right)^{n_1 }    -\sum_{l=0}^{\frac{p-3}{2}}  d(l) \sum_{  \substack{k =1 \\ k \equiv l \mod{ \frac{p-1}{2} }} }^{n}2^{n-k} s(n,k)\left(-2^{\frac{p-1}{2}}  \Pi(p)\right)^{ \frac{2(k-l)}{p-1} } \pmod{p}.
\end{align*}
Now each summation over $k$ in a fixed residue class vanishes by a direct application of Lemma \ref{rowsum} and $s(n,n)=1$, so that 
$$d(n) \equiv  d(n_0) \left(-2^{\frac{p-1}{2}}  \Pi(p)\right)^{n_1 }, $$
which completes the induction since this is equivalent to our given recursion.
\end{proof}

\begin{cor}
Consider a prime $p \equiv 1 \mod4$. Then
$$d\left( n+ \frac{(p-1)^2}{2} \right) \equiv d(n) \pmod{p}. $$
\end{cor}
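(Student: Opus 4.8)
The plan is to iterate the recurrence of the preceding theorem exactly $p-1$ times. Writing $C := -2^{\frac{p-1}{2}}\Pi(p)$ for the constant appearing in \eqref{drecur}, the relation $d\left(n+\frac{p-1}{2}\right)\equiv C\,d(n)\pmod p$ holds for every $n\geq \frac{p+1}{2}$. Since each step only increases the argument, all intermediate indices $n, n+\frac{p-1}{2}, n+2\cdot\frac{p-1}{2},\ldots$ continue to satisfy the hypothesis $n\geq\frac{p+1}{2}$, so I can compose the relation freely. Applying it $p-1$ successive times telescopes the total shift to $(p-1)\cdot\frac{p-1}{2}=\frac{(p-1)^2}{2}$ and accumulates the constant as a $(p-1)$-th power:
$$d\left(n+\frac{(p-1)^2}{2}\right)\equiv C^{\,p-1}\,d(n)\pmod p.$$

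Thus the entire statement reduces to verifying $C^{\,p-1}\equiv 1\pmod p$, which by Fermat's little theorem is equivalent to showing that $C$ is a unit, i.e. $C\not\equiv 0\pmod p$. First, $2^{\frac{p-1}{2}}\equiv\pm 1\pmod p$ by Euler's criterion, hence is a unit. Second, $\Pi(p)=\prod_{i=1}^{\frac{p-1}{2}}(4i-1)^2$ is a product of the odd numbers $3,7,\ldots,2p-3$, each of which is $\equiv 3\pmod 4$ and bounded above by $2p-3<2p$. The only multiple of $p$ in the range $[3,2p-3]$ would be $p$ itself, but $p\equiv 1\pmod 4$ is not of the form $4i-1$; therefore no factor of $\Pi(p)$ is divisible by $p$, and $\Pi(p)$ is a unit modulo $p$. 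Consequently $C$ is a unit and $C^{\,p-1}\equiv 1\pmod p$, which completes the argument.

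The argument carries no genuine obstacle: all of the content is already contained in the preceding theorem, and the corollary follows once the unit-ness of $C$ is recorded. The only point requiring the slightest care is the observation that the congruence $p\equiv 1\pmod 4$---which forces $\Pi(p)$ to be a product of residues $\equiv 3\pmod 4$---is exactly what prevents $p$ from dividing $\Pi(p)$; this is the same $\bmod\,4$ dichotomy that drove the vanishing arguments in Lemma \ref{ulem}. I would therefore simply record the iteration, invoke Fermat's little theorem, and verify the non-vanishing of $C$, noting in closing that the resulting period $\frac{(p-1)^2}{2}$ need not be minimal.
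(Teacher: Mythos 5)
Your proposal is correct and follows essentially the same route as the paper: iterate the recurrence \eqref{drecur} exactly $p-1$ times to accumulate $C^{p-1}$ and then invoke Fermat's little theorem. The only difference is that you explicitly verify $C \not\equiv 0 \pmod{p}$ (via Euler's criterion and the observation that every factor $4i-1$ of $\Pi(p)$ is $\equiv 3 \pmod 4$ while $p \equiv 1 \pmod 4$), a point the paper asserts without proof, so your write-up is if anything slightly more complete.
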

\begin{proof}
Note that the recurrence \eqref{drecur} is of the form $d\left( n+ \frac{p-1}{2} \right) \equiv Cd(n)  $, for some nonzero constant $C$. Iterating this $p-1$ times gives $d\left( n+ \frac{(p-1)^2}{2} \right) \equiv C^{p-1} d(n) \pmod{p}$. An appeal to Fermat's Little Theorem gives $C^{p-1}\equiv 1 \pmod{p}$ for any nonzero $C$, and completes the proof.
\end{proof}
Note that $\frac{(p-1)^2}{2}$ may not be the minimal period of $d(n)$; a fine understanding of the order of $ \prod_{i=1}^{\frac{p-1}{2}} (4i-1)^2$ is elusive.

\section{Extensions}
After  extensive numerical investigation, it appears that we should be able to lift our result to arbitrary powers of a prime. Additionally, there appears to be a similar result to our main theorem, with period $\frac{p-1}{4}$ instead. 
\begin{conj}
We can formulate the following conjectures:
\begin{enumerate}
\item Let $p$ be any prime. Then for any positive $k$, there exists an $n_k$ such that for all $n>n_k$, we have $u(n) \equiv v(n) \equiv 0 \pmod{p^k}$.
\item Consider a prime $p\equiv 1 \pmod{4}$. Then there exists a constant $C_p$ such that 
$$d\left(n+ \frac{p-1}{4}\right)\equiv C_pd(n) \pmod{p}. $$
Some first examples are $d(n+3) \equiv 4 d(n) \pmod{13}$ and $d(n+4) \equiv -2d(n) \pmod{17}$, which can be verified by computing the first $p$ terms and appealing to our Theorem \ref{mainthm}. Numerically, we conjecture $$C_p = \pm \prod_{i=1}^{ \frac{p-1}{2}}(4i-1),$$
with the sign being determined by the Legendre symbol $\left( \frac{2}{p}\right)$.
\item Consider a prime $p \equiv 1 \pmod{4}$ or $p =2$. Then for any positive $k$, there exists a period $D_{p,k}$ such that 
$$d\left(n+D_{p,k}\right)\equiv d(n) \pmod{p^k},$$
which may also support finer relations such as our Theorem \ref{mainthm}. For example, we conjecture $d(n+2^k)\equiv d(n) \pmod{2^{k+1}}$.
\item Consider a prime $p \equiv 3 \pmod{4}$. Then for any positive $k$, there exists an $n_k$ such that for all $n>n_k$, we have $d(n) \equiv 0 \pmod{p^k}$.
\end{enumerate}
\end{conj}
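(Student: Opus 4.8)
The four parts form a natural hierarchy, and the plan is to attack them in dependency order: part (1) is the foundation on which the rest rest, exactly as Lemma \ref{ulem} underpinned Theorem \ref{mainthm}. Granting the prime-power vanishing of $u$ and $v$, the entire chain Lemma \ref{slem} $\to$ Lemma \ref{mainlem} $\to$ Lemma \ref{sdecomp} $\to$ Lemma \ref{rowsum} can in principle be re-run modulo $p^k$, delivering the periodicity statements (3) and (4). So I would establish (1) first, propagate it to (3) and (4), and treat the finer relation (2) separately.

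For part (1), there are two routes. The elementary route mirrors the proof of Lemma \ref{ulem}, but Lucas's Theorem --- which is intrinsically a mod-$p$ statement, sensitive only to base-$p$ digits --- must be replaced by a prime-power analogue (Granville's or Jacobsthal's congruences for $\binom{n}{k}\pmod{p^k}$), and the clean single-digit cancellations driving the $p\equiv 1\pmod 4$ induction will splinter into many more cases. Worse, the cancellation in line \eqref{line1} exploited that $p$ divides one factor of the product $1^2 5^2\cdots$ exactly once; for vanishing modulo $p^k$ one must track when these products accumulate $k$ factors of $p$, forcing an induction whose base region has length $\sim k(p-1)/2$ rather than $(p-1)/2$. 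The conceptual route instead reads $u$ and $v$ off the hypergeometric generating functions ${}_2F_1(1/4,1/4;1/2;4t)$ and its square root, which are periods attached to the CM elliptic curve behind $\theta_3$; here one would invoke Dwork-type congruences and the $p$-adic theory of half-integral weight modular forms, in the spirit of Guerzhoy--Mertens--Rolen \cite{Rolen}, over $p$-adic coefficient rings. This is the \textbf{main obstacle}: both routes are genuinely harder at prime-power level, and, crucially, they must also cover $p\equiv 3\pmod 4$ and $p=2$, for which even the mod-$p$ vanishing was never produced by the digit-chasing of Lemma \ref{ulem}.

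Granting part (1), parts (3) and (4) should follow by lifting Sections 3 and 4 modulo $p^k$. The reduction to core configurations (Lemma \ref{sdecomp}) goes through once the $p$-adic valuations of Lemma \ref{mainlem} are controlled to higher order. The one genuinely new ingredient is a strengthening of the Frobenius count used in the Corollary to Lemma \ref{varchange}: Theorem \ref{frob} only gives divisibility of $|X_n|$ by $p$, so to reach $p^k$ one must either count solutions of $x^{p^k}=1$ in $\mathfrak{S}_n$ and rebuild the change of variables of Lemma \ref{varchange} around the resulting identity, or invoke the sharper fact that the number of elements of $p$-power order in $\mathfrak{S}_n$ is divisible by high powers of $p$ for large $n$. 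With that, the induction proving Theorem \ref{mainthm} reproduces a relation $d(n+D_{p,k})\equiv C\,d(n)\pmod{p^k}$ with $C$ a unit, and Euler's theorem on $(\Z/p^k\Z)^\times$ yields a genuine period $D_{p,k}$; the $2$-adic refinement $d(n+2^k)\equiv d(n)\pmod{2^{k+1}}$ should fall out of the explicit structure of $2$-adic units. For part (4) the mechanism is identical, except that $p\equiv 3\pmod 4$ forces the constant analogous to $-2^{(p-1)/2}\Pi(p)$ to itself carry a factor of $p$ (as it does mod $p$ in the cited dichotomy), driving $d(n)$ to $0$ modulo $p^k$.

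Finally, part (2) is orthogonal: it demands a \emph{square root} of the main relation, since iterating $d(n+\tfrac{p-1}{4})\equiv C_p d(n)$ twice must reproduce $d(n+\tfrac{p-1}{2})\equiv -2^{(p-1)/2}\Pi(p)\,d(n)$, forcing $C_p^2\equiv -2^{(p-1)/2}\Pi(p)\pmod p$. This quantity is always a quadratic residue for $p\equiv 1\pmod 4$ (its square part $\Pi(p)=\prod(4i-1)^2$ is a perfect square, and $-(2/p)$ is a residue in both cases), so a root exists and the two signs account for the $\pm$; the hidden factor $\sqrt{-1}$, which appears precisely when $p\equiv 1\pmod 8$ so that $2^{(p-1)/2}\equiv 1$, is what the Legendre-symbol dependence encodes. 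The plan is to prove the relation directly by comparing $s(n+\tfrac{p-1}{4},k)$ with $s(n,k)$ through Lemma \ref{sdecomp}, seeking a refinement of the residue bookkeeping modulo $\tfrac{p-1}{4}$ under which the inner core-configuration sums again vanish via Lemma \ref{varchange}. The obstacle is that halving the period destroys the exact matching between the modulus $\tfrac{p-1}{2}$ and the $p$-cycle structure that Lemma \ref{rowsum} exploited; I expect the correct square root to originate in a quadratic Gauss sum or the CM action of $\Z[i]$ on the underlying theta function, and pinning down that sign canonically, rather than merely verifying it prime by prime, is the crux.
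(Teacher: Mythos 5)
First, a point of order: the statement you were asked to prove is labeled a \emph{conjecture} in the paper, and the paper contains no proof of it --- only heuristic remarks (WZ-style methods, higher-order Lucas analogues, supercongruences for part (1); the Hurwitz congruence for $e_n$ as support for part (2)). So there is no proof to compare yours against, and your submission is, accordingly, not a proof either: it is a research program in which every load-bearing step is conditional. Concretely, the gaps are: (a) part (1) is assumed, and for $p\equiv 3\pmod 4$ and $p=2$ even the $k=1$ case lies outside the digit-chasing of Lemma \ref{ulem}, exactly as you note; (b) the claim that Lemmas \ref{mainlem} and \ref{sdecomp} ``in principle re-run modulo $p^k$'' understates the difficulty --- the dichotomy $N_\lambda\equiv 0$ unless $\lambda$ is extremal rests on forcing $\nu_p(N_\lambda)\geq 1$, and modulo $p^k$ the surviving set of partitions is much larger (the constraints $\beta<p$ and $c_p$ maximal relax to allow $O(k)$ deviations), so the finite ``core configuration'' structure, the Wilson-theorem step $1/p!^{c}\equiv (-1)^{c}p^{-c}$, and the matching between $c_p^{(k)}$ and $p$-cycle counts in Lemma \ref{varchange} all need genuine reconstruction, not bookkeeping; (c) the strengthened Frobenius count you invoke for $\mathfrak{S}_n$ is only gestured at, though you are pointing in the right direction --- the paper's own reference \cite{Frobenius} is precisely about higher $p$-divisibility of $\#\{x:x^p=1\}$ in symmetric groups, which is where such a lift would come from.

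That said, your analysis of part (2) is the strongest content here and deserves to be singled out: the forced relation $C_p^2\equiv -2^{(p-1)/2}\Pi(p)\pmod p$ is correct, your observation that this is always a quadratic residue for $p\equiv 1\pmod 4$ is correct, and your identification of the hidden $\sqrt{-1}$ factor when $p\equiv 1\pmod 8$ actually \emph{corrects} the conjecture as stated: at $p=17$ one has $\prod_{i=1}^{8}(4i-1)\equiv 9\pmod{17}$, so the paper's own example $C_{17}=-2$ is not of the literal form $\pm\prod(4i-1)$, but is $\pm\sqrt{-1}\cdot 9$ with $\sqrt{-1}\equiv\pm 4\pmod{17}$; the closed form can only hold verbatim when $\left(\frac{2}{p}\right)=-1$, i.e.\ $p\equiv 5\pmod 8$. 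This is a sharp consistency check that refines the conjectured $C_p$, but it establishes only what $C_p$ must be \emph{if} the period-$\frac{p-1}{4}$ relation holds; the existence of that relation --- which, as you say, breaks the exact matching between the modulus $\frac{p-1}{2}$ and the $p$-cycle structure exploited in Lemma \ref{rowsum} --- remains entirely open, as do all four parts.
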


The reduction of $u(n)$ and $v(n)$ is heavily connected to recent work on \textit{hypergeometric supercongruences}, such as that of Victor J. W. Guo, which give binomial-type congruences modulo higher powers of a prime. Because these coefficients are defined in terms of a hypergeometric recurrence, they may be amenable to WZ style proofs. Alternatively, we may be able to recursively use the vanishing of $u(n) \mod{p^k}$ in connection with the vanishing of a binomial coefficient mod $p$ to give vanishing of $u(n) \mod{p^{k+1}}$. Another approach is to use high order analogs of Lucas's Theorem, which are however more unwieldy.



\section{Acknowledgements}
Many thanks to Larry Washington, Christophe Vignat, Lin Jiu, and Karl Dilcher, for chatting over coffee, emailing me back at 2 am, and contributing endless blackboard space as we discussed this. The bulk of this work was completed during an idyllic summer in Halifax, and I'd like to again thank Karl Dilcher for the invitation. 

\end{document}